\renewcommand{\Re}{\operatorname{Re}}
\renewcommand{\Im}{\operatorname{Im}}
\newcommand{\dd}{{\rm d}}
\newcommand{\E}{\mathbb E}
\newcommand{\bY}{\mathbf Y}
\newcommand{\R}{\mathbb{R}}
\newcommand{\Q}{\mathbb{Q}}
\newcommand{\N}{\mathbb{N}}
\newcommand{\C}{\mathbb{C}}
\newcommand{\Z}{\mathbb{Z}}
\renewcommand{\P}{\mathbb{P}}
\newcommand{\dist}{\mathop{\mathrm{dist}}}
\newcommand{\sinc}{\mathop{\mathrm{sinc}}}
\newcommand{\Var}{\mathop{\mathrm{Var}}\nolimits}
\newcommand{\Cov}{\mathop{\mathrm{Cov}}\nolimits}
\newcommand{\eee}{{\rm e}}
\newcommand{\Zeros}{{\mathbf{Zeros}}}
\newcommand{\MMM}{M}
\newcommand{\eps}{\varepsilon}
\newcommand{\todistr}{\overset{d}{\underset{n\to\infty}\longrightarrow}}
\newcommand{\todistrnon}{\overset{d}{\longrightarrow}}
\newcommand{\toweak}{\overset{w}{\underset{n\to\infty}\longrightarrow}}
\newcommand{\toweaknon}{\overset{w}{\longrightarrow}}
\newcommand{\tovague}{\overset{v}{\underset{n\to\infty}\longrightarrow}}
\newcommand{\tovaguenon}{\overset{v}{\longrightarrow}}
\newcommand{\toas}{\overset{a.s.}{\underset{n\to\infty}\longrightarrow}}
\newcommand{\ton}{\overset{}{\underset{n\to\infty}\longrightarrow}}
\newcommand{\ind}{\mathbbm{1}}
\newcommand{\iii}{\mathrm{i}}
\theoremstyle{plain}
\newtheorem{theorem}{Theorem}[section]
\newtheorem{lemma}[theorem]{Lemma}
\newtheorem{corollary}[theorem]{Corollary}
\theoremstyle{definition}
\theoremstyle{remark}
\newtheorem{remark}[theorem]{Remark}
\begin{document}

\title[Real roots of random trigonometric polynomials]{Local universality for real roots of random trigonometric polynomials}

\author{Alexander Iksanov}
\address{Alexander Iksanov, Faculty of Cybernetics, Taras Shevchenko National University of Kyiv, 01601 Kyiv, Ukraine}

\email{{iksan@univ.kiev.ua}}

\author{Zakhar Kabluchko}
\address{Zakhar Kabluchko,
Institut f\"ur Mathematische Statistik,
Westf\"{a}lische Wilhelms-Universit\"{a}t M\"{u}nster,
Orl\'eans--Ring 10,
48149 M\"unster, Germany}

\email{{zakhar.kabluchko@uni-muenster.de}}

\author{Alexander Marynych}
\address{Alexander Marynych, Faculty of Cybernetics, Taras Shevchenko National University of Kyiv, 01601 Kyiv, Ukraine}
\email{{marynych@unicyb.kiev.ua}}

\begin{abstract}
Consider a random trigonometric polynomial $X_n: \R \to \R$ of the form
$$
X_n(t) = \sum_{k=1}^n \left( \xi_k \sin (kt) + \eta_k \cos (kt)\right),
$$
where $(\xi_1,\eta_1),(\xi_2,\eta_2),\ldots$ are independent identically distributed bivariate real random vectors with zero mean and unit covariance matrix. Let $(s_n)_{n\in\N}$ be any sequence of real numbers. We prove that as $n\to\infty$, the number of real zeros of $X_n$ in the interval $[s_n+a/n, s_n+ b/n]$ converges in distribution to the
number of zeros in the interval $[a,b]$ of a stationary, zero-mean Gaussian process with correlation function $(\sin t)/t$. We also establish similar local universality results for the centered random vectors $(\xi_k,\eta_k)$ having an arbitrary covariance matrix or
belonging to the domain of attraction of a two-dimensional
$\alpha$-stable law.
\end{abstract}

\keywords{Random trigonometric polynomials, real zeros, local universality, stationary processes, random analytic functions, functional limit theorem,  stable processes}

\subjclass[2010]{Primary: 	26C10;  Secondary: 	30C15, 42A05, 60F17, 60G55}

\maketitle

\section{Introduction} \label{sec:Intro_and_main_results}

We are interested in random trigonometric polynomials $X_n: \R \to \R$ of the form
\begin{equation}\label{eq:def_X_n}
X_n(t) = \sum_{k=1}^n \left( \xi_k \sin (kt) + \eta_k \cos (kt)\right),
\end{equation}
where the coefficients $\xi_1,\eta_1, \xi_2,\eta_2,\ldots$ are real  random variables.
In a recent paper, Aza\"is et~al.~\cite{azais_etal} conjectured that if $\xi_1,\eta_1,\xi_2,\eta_2,\ldots$ are independent identically distributed (i.i.d.)\ with zero mean and finite variance, then the number of real zeros of $X_n$ in the interval $[a/n,b/n]$ converges in distribution (without normalization) to the
number of zeros in the interval $[a,b]$ of a stationary Gaussian process $Z:=(Z(t))_{t\in\R}$ with zero mean and
$$
\Cov (Z(t), Z(s)) = \sinc (t-s), \quad t,s\in\R,
$$
where
$$
\sinc t = \begin{cases}
(\sin t)/t, &\text{if } t\neq 0,\\
1, &\text{if } t=0.
\end{cases}
$$
The limit distribution does not depend on the distribution of $\xi_1$, a phenomenon referred to as local universality.
Aza\"is et~al.~\cite{azais_etal} proved their conjecture assuming that $\xi_1$ has an infinitely smooth density that satisfies certain  integrability conditions. However, as they remarked, even the case of the Rademacher distribution $\P[\xi_1=\pm 1]=1/2$  remained open. Our aim is to prove the conjecture of~\cite{azais_etal} in full generality (Theorem~\ref{theo:local_num_zeros} below). The method of proof proposed in the present paper is very different from the one used in~\cite{azais_etal}. Let us briefly sketch our approach assuming that $(\xi_1,\eta_1), (\xi_2,\eta_2),\ldots$ are i.i.d.\ random vectors such that $\xi_1$ and $\eta_1$ are centered uncorrelated random variables with unit variance.
We start by proving a functional limit theorem (Theorem~\ref{theo:FCLT} below) stating that
\begin{equation}\label{eq:fclt_basic}
\frac 1 {\sqrt n} X_n\left(s_n+\frac \cdot n\right) \ton  Z(\cdot)
\end{equation}
weakly on some suitable space of \textit{analytic} functions. Then, we use the continuous mapping theorem to deduce the convergence of the real zeros. The basic fact underlying this part of the proof is the Hurwitz theorem stating that the complex zeros of an analytic function do not change ``too much'' under a slight perturbation of the function. Essentially, Hurwitz's theorem tells us that the functional which maps an analytic function to the point process of its complex zeros is continuous. Since we are interested in real zeros, we have to  ensure that real zeros remain real after a small perturbation. If we restrict ourselves to analytic functions which are real on $\R$, then non-real zeros come in complex conjugated pairs, and a \textit{simple} real zero cannot become complex under a small perturbation of the function. These considerations, see Lemmas~\ref{lem:1} and~\ref{lem:1a}, justify the use of the continuous mapping theorem.

Our method is quite general and allows us to establish the corresponding local universality result in the case when $(\xi_1,\eta_1)$ has a non-zero correlation (Theorem~\ref{theo:local_arbitrary_cov_matrix}) or even does not have finite second moments but is in the domain of attraction of some stable two-dimensional law (Theorem~\ref{theo:local_stable_case}).

Closing the introduction, we mention that the scope of our approach is not restricted to trigonometric polynomials. The same method can be applied to various ensembles of random algebraic polynomials. A similar method was used in~\cite{starr}, \cite{ledoan_etal} for complex zeros of random Taylor series near the circle of convergence, in~\cite{shirai} for Dirichlet series with random coefficients and some other sums of analytic functions with random coefficients, and in~\cite{kabluchko_klimovsky1,kabluchko_klimovsky2} for complex zeros of the partition function of the (Generalized) Random Energy Model. Unlike in these works, we investigate real zeros.
Let us also mention that the asymptotics  of $\E N_n[a,b]$ (that is, the expected number of real zeros in an interval whose length does not go to $0$) was studied in the recent works~\cite{angst_poly} and~\cite{flasche}, where more references on random trigonometric polynomials can be found.

The structure of the paper is as follows. The main results are stated in Section~\ref{sec:main_results}. Functional limit theorems for $X_n$ and their proofs are given in Section~\ref{sec:convergence_polynomials}. In Section~\ref{sec:convergence_zeros} the proofs of the main theorems are presented. Some auxiliary technical lemmas are collected in the Appendix.

As usual, $\todistrnon$ denotes convergence in distribution of random  variables and vectors. The notation $\toweaknon$ is used to denote weak convergence of random elements with values in a metric space, while $\tovaguenon$ denotes vague convergence of locally finite measures.

\section{Main results}\label{sec:main_results}

\subsection{Coefficients with finite second moments} \label{subsec:coeff_finite_sec_moments}

For a real analytic function $f$ which does not vanish identically denote by $N_f[a,b]$ the number of zeros of $f$ in the interval $[a,b]$.
It will become clear from our proofs that the results hold independently of  whether the zeros are counted with multiplicities or not. Theorem~\ref{theo:local_num_zeros}, which is our first main result, proves the conjecture of~\cite{azais_etal}, weakens the original assumptions of~\cite{azais_etal} on the distribution of the coefficients and allows for an arbitrary sequence $(s_n)_{n\in\N}$ as the location of the scaling window.

\begin{theorem}\label{theo:local_num_zeros}
Let $(\xi_1,\eta_1),(\xi_2,\eta_2),\ldots$ be i.i.d.\ random vectors with zero mean and unit covariance matrix, that is,
$$
\E \xi_1= \E \eta_1 =0, \quad \E [\xi_1^2]=\E [\eta_1^2] =1, \quad \E [\xi_1\eta_1] = 0.
$$
Let $(s_n)_{n\in\N}$ be any sequence of real numbers and $[a,b]\subset \R$ a finite interval. Then,
$$
N_{X_n}\left[s_n+\frac an, s_n+\frac bn\right] \todistr N_Z[a,b],
$$
where $(Z(t))_{t\in\R}$ is the stationary Gaussian process defined in Section~\ref{sec:Intro_and_main_results}.
\end{theorem}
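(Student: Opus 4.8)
The plan is to combine two ingredients: a functional limit theorem for the rescaled polynomials on a space of analytic functions, and a continuity property of the zero-counting functional with respect to the topology of that space. Since $X_n$ is a real trigonometric polynomial, it extends to an entire function on $\C$ (via $t\mapsto z$, replacing $\sin(kt),\cos(kt)$ by $\sin(kz),\cos(kz)$), and the rescaled version $z\mapsto n^{-1/2}X_n(s_n+z/n)$ is a sequence of random entire functions. I would first invoke Theorem~\ref{theo:FCLT}, which should give that
\[
\frac{1}{\sqrt n}\,X_n\!\left(s_n+\frac{\cdot}{n}\right) \toweak Z(\cdot)
\]
weakly in the space of analytic functions on $\C$ (say $H(\C)$ with the topology of locally uniform convergence), where $Z$ is the stationary Gaussian process with covariance $\sinc$, extended to an entire function — note that $Z$ is a.s. entire because $\sinc$ extends to an entire function with the right growth. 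The Gaussian limit $Z$ is real on $\R$, so its non-real zeros come in conjugate pairs.

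Next I would transfer this weak convergence through the zero-counting map. The key point, recorded in Lemmas~\ref{lem:1} and~\ref{lem:1a}, is that if $f_n\to f$ locally uniformly with all functions real on $\R$, $f\not\equiv 0$, $f$ has no zero at the endpoints $a,b$, and all real zeros of $f$ in $[a,b]$ are simple, then $N_{f_n}[a,b]\to N_f[a,b]$. The mechanism: by Hurwitz's theorem the complex zeros of $f_n$ converge (as a point process) to those of $f$; a simple real zero of $f$ perturbs to a zero of $f_n$ which is still real because a single real zero of a real analytic function cannot split off the real axis under a small perturbation (that would require an even number of zeros near it by conjugate symmetry). Thus, on the event that $Z$ has no zeros at $a$ and $b$ and all its real zeros in $[a,b]$ are simple, the functional $f\mapsto N_f[a,b]$ is continuous at $Z$. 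By the almost sure properties of the nondegenerate stationary Gaussian process $Z$ — namely $\P[Z(a)=0]=\P[Z(b)=0]=0$, and, by a Bulinskaya-type argument, $\P[\exists\,t: Z(t)=Z'(t)=0]=0$ — this event has probability $1$. Hence the continuous mapping theorem applies and yields $N_{X_n}[s_n+a/n,\,s_n+b/n]\todistr N_Z[a,b]$.

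There is one reduction to handle at the start: Theorem~\ref{theo:local_num_zeros} assumes unit covariance matrix, so $\xi_1,\eta_1$ are uncorrelated with unit variances, which is exactly the setting in which the sketched FCLT \eqref{eq:fclt_basic} is stated; no covariance reduction is needed here (that is the content of the more general Theorem~\ref{theo:local_arbitrary_cov_matrix}). I would also need to be slightly careful that $N_{X_n}$ counts only the \emph{real} zeros in the interval, so the continuity lemma must be stated for real zeros of real analytic functions, not for complex zeros; this is precisely why the simplicity of the real zeros of the limit $Z$ is essential, and why we work on a space of functions that are real on $\R$. The remark in the statement that the result is insensitive to whether zeros are counted with multiplicity is automatic once we know the limit $Z$ has only simple real zeros a.s.

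The main obstacle I anticipate is the functional limit theorem itself — establishing \eqref{eq:fclt_basic} in the \emph{analytic} topology rather than merely in $C[a,b]$ or in finite-dimensional distributions. Tightness in $H(\C)$ requires uniform control of $n^{-1/2}X_n(s_n+z/n)$ on compact subsets of $\C$, i.e. uniform moment bounds on $\sup_{|z|\le R} n^{-1/2}|X_n(s_n+z/n)|$; since $X_n$ grows on the imaginary axis, one must keep track of the $e^{R}$-type growth but uniformly in $n$ and in the shift $s_n$. The convergence of finite-dimensional distributions (of $X_n$ together with finitely many derivatives, at finitely many complex points) is a multivariate CLT for triangular arrays of row-wise i.i.d.\ summands $\xi_k\sin(k(s_n+z/n))+\eta_k\cos(k(s_n+z/n))$, for which one checks the Lindeberg condition and computes the limiting covariance structure — the sums $n^{-1}\sum_{k=1}^n \cos(k(t-s)/n)\to \sinc(t-s)$ being Riemann-sum computations. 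This FCLT is asserted as Theorem~\ref{theo:FCLT} above, so for the present proof it may be taken as given; the remaining work — the two continuity lemmas and the a.s.\ nondegeneracy of $Z$ — is comparatively routine.
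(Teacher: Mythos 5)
Your proposal is correct and follows essentially the same route as the paper: establish the functional limit theorem on the space $\mathcal H_{\R}$ of entire functions real on $\R$, then apply the continuous mapping theorem using the local constancy of $N$ at functions with only simple real zeros in $[a,b]$ and no zeros at the endpoints (Hurwitz plus conjugate symmetry), together with Bulinskaya's criterion and the nondegeneracy of $Z(t)$ to show this continuity set has full measure. The only point worth making explicit is that the uniform bound on the densities of $Z(t)$ needed for the Bulinskaya argument comes from $\Var Z(t)\equiv 1$.
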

We can also prove the weak convergence of point processes of zeros. Given a locally compact metric space $\mathbb{X}$, denote by $\MMM_p(\mathbb{X})$ the space of locally finite point measures on $\mathbb{X}$ endowed with the vague topology. A random element with values in $\MMM_p(\mathbb{X})$ is called a point process on $\mathbb{X}$. We refer to~\cite{resnick_book} for the information on point processes and their weak convergence.
For a real analytic function $f$ which does not vanish identically denote by $\Zeros_\R(f)$ the locally finite point measure on $\R$ counting the real zeros of $f$ with multiplicities. The next theorem is stronger than Theorem~\ref{theo:local_num_zeros} in view of the continuous mapping theorem and Lemma~\ref{lem:1a} below.
\begin{theorem}\label{theo:local_num_zeros_point_proc}
Under the same assumptions as in Theorem~\ref{theo:local_num_zeros} we have
$$
\Zeros_{\R}\left(X_n\left(s_n + \frac{\cdot}{n}\right)\right) \toweak \Zeros_{\R}(Z(\cdot))
$$
on $\MMM_p(\R)$.
\end{theorem}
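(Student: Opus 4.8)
The plan is to deduce Theorem~\ref{theo:local_num_zeros_point_proc} from the functional limit theorem \eqref{eq:fclt_basic} via the continuous mapping theorem applied to the ``zero-counting'' map. The starting point is Theorem~\ref{theo:FCLT}: the rescaled and normalised polynomials $\frac{1}{\sqrt n} X_n(s_n + \cdot/n)$ converge weakly on a suitable space of functions analytic on $\R$ (equivalently, on a complex neighbourhood of $\R$, with the topology of locally uniform convergence on compacts) to the Gaussian process $Z$, viewed as a random analytic function. The key point is that these limit functions are \emph{real} on $\R$, and that $Z$ almost surely has only \emph{simple} real zeros — the latter because $(Z(t), Z'(t))$ is a non-degenerate Gaussian vector for each fixed $t$, so $\P[Z(t)=Z'(t)=0]=0$, and an integration (Kac--Rice or a Fubini argument over a countable cover) shows $Z$ has no double real zeros a.s. Moreover $Z \not\equiv 0$ a.s. These facts place the limit, with probability one, in the ``good set'' of real-analytic functions to which the continuity lemmas apply.

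Next I would invoke Lemma~\ref{lem:1a} (together with Lemma~\ref{lem:1}), which should assert precisely that the map $f \mapsto \Zeros_\R(f)$, from the space of functions analytic on a neighbourhood of $\R$ and real on $\R$ into $\MMM_p(\R)$, is continuous at every point $f$ that does not vanish identically and has only simple real zeros. This is the rigorous form of the Hurwitz-theorem heuristic in the introduction: under locally uniform convergence $f_n \to f$, the complex zeros of $f_n$ converge (as a point process on a complex neighbourhood) to those of $f$; since $f$ and $f_n$ are real on $\R$, non-real zeros occur in conjugate pairs, so a simple real zero of $f$ is approached by exactly one zero of $f_n$, which must itself be real (a conjugate pair cannot collapse onto a simple real zero). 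Hence the real-zero point measures converge vaguely. The combination ``CMT + Lemma~\ref{lem:1a} + the a.s.\ genericity of $Z$'' then yields $\Zeros_\R(X_n(s_n+\cdot/n)) \toweak \Zeros_\R(Z(\cdot))$ on $\MMM_p(\R)$. Theorem~\ref{theo:local_num_zeros} follows as a corollary by further composing with the (a.s.\ continuous, at $\Zeros_\R(Z)$) evaluation $\mu \mapsto \mu([a,b])$, using that $\P[Z(a)=0]=\P[Z(b)=0]=0$.

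I anticipate the main obstacle to be purely in setting up the function-space framework cleanly rather than in any single estimate: one must fix a topological space of analytic functions on which (i) Theorem~\ref{theo:FCLT} delivers genuine weak convergence of \emph{random elements} (so tightness/measurability in that space, handled by Theorem~\ref{theo:FCLT}), and (ii) the zero map is continuous on a Borel set of full limit measure. The delicate point within (ii) is the behaviour near the endpoints and ``at infinity'': vague convergence on $\MMM_p(\R)$ only tests compact subsets of $\R$, so it suffices to control zeros in each fixed compact $[-R,R]$, and there Hurwitz applies uniformly; one must also check no mass escapes, i.e.\ that $f_n \to f$ locally uniformly forbids zeros of $f_n$ from accumulating at a point where $f$ is nonzero — immediate from locally uniform convergence. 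A secondary technical item is verifying that $Z$, as constructed, really is (a version that is) analytic on a complex strip, which is where the specific correlation $\sinc$ (an entire function of exponential type) enters; this should already be part of, or immediate from, Theorem~\ref{theo:FCLT} and its proof. Given all of this, the proof of Theorem~\ref{theo:local_num_zeros_point_proc} is short: state that $Z$ lies a.s.\ in the continuity set of $f \mapsto \Zeros_\R(f)$, cite Lemma~\ref{lem:1a} and the continuous mapping theorem, and conclude.
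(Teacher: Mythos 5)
Your proposal follows the paper's proof essentially verbatim: the functional limit theorem on $\mathcal H_{\R}$ (Theorem~\ref{theo:FCLT}), continuity of $f\mapsto \Zeros_{\R}(f)$ at functions with only simple real zeros via Hurwitz's theorem and the conjugate-pair argument (Lemma~\ref{lem:1a}), the continuous mapping theorem, and the a.s.\ simplicity of the real zeros of $Z$. One caveat on the last step: a plain Fubini argument only shows that $\{t: Z(t)=Z'(t)=0\}$ has Lebesgue measure zero a.s., not that it is empty, so that route as stated would not close the gap; the paper instead invokes Bulinskaya's lemma (Lemma~\ref{lem:no_mult_zeros_general}), which requires only $C^1$ sample paths and a uniform bound on the density of $Z(t)$ (immediate here since $\Var Z(t)\equiv 1$) — your Kac--Rice alternative is also viable but would need to be carried out rather than asserted.
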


In the next theorem we consider i.i.d.\ random vectors with arbitrary covariance matrix. In particular, this theorem covers random trigonometric polynomials of the form $\sum_{k=1}^n \xi_k \sin (kt)$ and $\sum_{k=1}^n \eta_k \cos (kt)$ involving $\sin$ or $\cos$ terms only.
\begin{theorem}\label{theo:local_arbitrary_cov_matrix}
Let $(\xi_1,\eta_1),(\xi_2,\eta_2),\ldots$ be i.i.d.\ random vectors with
$$
\E \xi_k =  \E \eta_k = 0, \quad \E [\xi_k^2]= \sigma_1^2<\infty, \quad \E [\eta_k^2] =\sigma_2^2<\infty, \quad \E [\xi_k\eta_k] = \rho,
$$
where $0< \sigma_1^2 + \sigma_2^2 <\infty$. Then, for every fixed $s\in \R$,
$$
\Zeros_{\R}\left(X_n\left(s + \frac{\cdot}{n}\right)\right) \toweak \Zeros_{\R}(G(\cdot))
$$
on $\MMM_p(\R)$, where $(G(t))_{t\in\R}$ is a centered Gaussian process with covariance
\begin{align}
\lefteqn{\E[G(t_1)G(t_2)]} \label{eq:G_covar}\\
&=&
\begin{cases}
\frac {\sigma_1^2 + \sigma_2^2} 2 \sinc (t_1-t_2), & s\notin\pi\Z,\\
\frac {\sigma_1^2 + \sigma_2^2}2  \sinc (t_1-t_2)- \frac {\sigma_1^2 - \sigma_2^2}2 \sinc (t_1+t_2)+\rho \frac {1- \cos(t_1+t_2)}{t_1+t_2}, & s\in\pi\Z,
\end{cases}\notag
\end{align}
with the convention that $x\mapsto (1-\cos x)/x$ equals $0$ at $x=0$.
\end{theorem}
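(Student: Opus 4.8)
The plan is to run exactly the same machine as for Theorem~\ref{theo:local_num_zeros}: first establish a functional limit theorem stating that $n^{-1/2}X_n(s+\cdot/n)$ converges weakly, on a suitable space of entire functions, to a centered Gaussian entire function $G$ whose restriction to $\R$ has covariance \eqref{eq:G_covar}; then transfer this to the point processes of real zeros by the continuous mapping theorem together with the Hurwitz-type Lemmas~\ref{lem:1} and~\ref{lem:1a}. Compared with Theorem~\ref{theo:local_num_zeros}, the only genuinely new work is (i) computing the limiting covariance, which now splits according to whether $s\in\pi\Z$, and (ii) verifying that the resulting $G$ still satisfies the non-degeneracy hypotheses needed to invoke the Hurwitz lemmas.

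For the functional limit theorem I would use the addition formulas for $\sin$ and $\cos$ to write
\[
X_n\Bigl(s+\tfrac un\Bigr)=\sum_{k=1}^n\Bigl(A_k\cos\tfrac{ku}{n}+B_k\sin\tfrac{ku}{n}\Bigr),
\]
with $A_k=\xi_k\sin(ks)+\eta_k\cos(ks)$ and $B_k=\xi_k\cos(ks)-\eta_k\sin(ks)$. The pairs $(A_k,B_k)$ are still independent (no longer identically distributed) and centered, and $\E[A_k^2],\E[B_k^2]\le 2(\sigma_1^2+\sigma_2^2)$ uniformly in $k$. This is precisely the structure covered by the machinery of Section~\ref{sec:convergence_polynomials} (a minor variant of Theorem~\ref{theo:FCLT} for independent, non-identically distributed summands): finite-dimensional convergence follows from the Lindeberg central limit theorem, using the uniform $L^2$ bound and $\E[(\xi_1^2+\eta_1^2)\ind_{|\xi_1|+|\eta_1|>\eps\sqrt n}]\to 0$, and tightness on the space of entire functions comes, as in the i.i.d.\ case, from the attendant moment bounds. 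Thus $n^{-1/2}X_n(s+\cdot/n)\toweak G$ for a centered Gaussian entire function $G$, and it remains to identify $\Cov(G(t_1),G(t_2))=\lim_n n^{-1}\E[X_n(s+t_1/n)X_n(s+t_2/n)]$.

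Expanding that expectation as a sum over $k$ of products of $\cos(kt_i/n),\sin(kt_i/n)$ weighted by $\E[A_k^2],\E[B_k^2],\E[A_kB_k]$, and using $\sin^2(ks)=\tfrac12(1-\cos 2ks)$, $\cos^2(ks)=\tfrac12(1+\cos 2ks)$, $\sin(ks)\cos(ks)=\tfrac12\sin 2ks$, the whole computation reduces to the limits $\lim_n n^{-1}\sum_{k=1}^n g(k/n)=\int_0^1 g$ and $\lim_n n^{-1}\sum_{k=1}^n g(k/n)\eee^{\pm 2\iii ks}$ for smooth $g$. The latter is $0$ as soon as $\eee^{2\iii s}\neq 1$, i.e.\ $s\notin\pi\Z$ (a one-line summation-by-parts / discrete Riemann--Lebesgue estimate), whereas for $s\in\pi\Z$ one has $\sin(ks)=0$ and $\cos^2(ks)=1$ for all $k$, so the oscillatory terms are simply absent. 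In the case $s\notin\pi\Z$ only the ``$\cos$-free'' parts survive, $\E[A_k^2]$ and $\E[B_k^2]$ both average to $\tfrac{\sigma_1^2+\sigma_2^2}{2}$ and $\E[A_kB_k]$ averages to $0$, giving $\tfrac{\sigma_1^2+\sigma_2^2}{2}\int_0^1\cos((t_1-t_2)x)\,\dd x=\tfrac{\sigma_1^2+\sigma_2^2}{2}\sinc(t_1-t_2)$; in the case $s\in\pi\Z$ one has $\E[A_k^2]=\sigma_2^2$, $\E[B_k^2]=\sigma_1^2$, $\E[A_kB_k]=\rho$, and the elementary integrals $\int_0^1\cos(t_1x)\cos(t_2x)\,\dd x$, $\int_0^1\sin(t_1x)\sin(t_2x)\,\dd x$, $\int_0^1\sin((t_1+t_2)x)\,\dd x$ assemble, after using product-to-sum identities, into the second line of \eqref{eq:G_covar}.

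Since $\sigma_1^2+\sigma_2^2>0$, the limit $G$ is a non-degenerate Gaussian entire function, real on $\R$; by Lemma~\ref{lem:1} its real zeros are a.s.\ simple and, for all but at most countably many points, a.s.\ not a zero of $G$. Lemma~\ref{lem:1a} together with the continuous mapping theorem then gives $\Zeros_\R(X_n(s+\cdot/n))\toweak\Zeros_\R(G(\cdot))$ on $\MMM_p(\R)$. The step I expect to need the most care is this last verification in the partially degenerate regimes: if, say, $\sigma_2=0$ (pure-sine polynomials) and $s\in\pi\Z$, then both $X_n$ and $G$ are \emph{forced} to vanish at $s$, so one must check that this forced zero is a.s.\ simple on both sides and does not interfere with the continuous-mapping argument — exactly the situation Lemmas~\ref{lem:1}--\ref{lem:1a} are designed to handle.
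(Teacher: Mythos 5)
Your overall architecture is exactly the paper's: prove a functional limit theorem $n^{-1/2}X_n(s+\cdot/n)\toweak G$ on $\mathcal H_\R$, then push it through Lemmas~\ref{lem:1}--\ref{lem:1a} and the continuous mapping theorem. The FLT part of your proposal is sound. Your decomposition via $A_k=\xi_k\sin(ks)+\eta_k\cos(ks)$, $B_k=\xi_k\cos(ks)-\eta_k\sin(ks)$ and a discrete Riemann--Lebesgue estimate for $n^{-1}\sum g(k/n)\eee^{\pm2\iii ks}$ is a harmless variant of the paper's computation, which instead expands the covariance with product-to-sum formulas and evaluates the oscillatory sums in closed form via \eqref{eq:trig_sums}; both yield \eqref{eq:G_covar}, and the Lindeberg and tightness checks carry over verbatim since $A_k^2+B_k^2=\xi_k^2+\eta_k^2$.

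The genuine gap is in the last step. You write that ``by Lemma~\ref{lem:1} its real zeros are a.s.\ simple,'' but Lemma~\ref{lem:1} (and Lemma~\ref{lem:1a}) are deterministic continuity statements on the set $A(\R)$ of functions \emph{without} multiple real zeros; to apply the continuous mapping theorem you must separately prove $\P[G\in A(\R)]=1$, and ``$\sigma_1^2+\sigma_2^2>0$, hence $G$ is non-degenerate'' does not do this. For $s\notin\pi\Z$ it is easy ($\Var G(t)$ is a positive constant, so Bulinskaya-type criteria with uniformly bounded one-dimensional densities apply), but for $s\in\pi\Z$ the process $G$ is \emph{not} stationary: $\Var G(t)=\frac{\sigma_1^2+\sigma_2^2}{2}-\frac{\sigma_1^2-\sigma_2^2}{2}\sinc(2t)+\rho\frac{1-\cos(2t)}{2t}$, which equals $\sigma_2^2$ at $t=0$ and can vanish there. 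The paper's proof fills exactly this hole: it shows $t\mapsto t\Var G(t)$ is non-decreasing (using $|\rho|\le\sigma_1\sigma_2$), deduces a uniform lower bound on $\Var G(t)$ over compact sets bounded away from $0$, applies the density criterion there, and treats $t=0$ separately by noting that when $\sigma_2=0$ (so $G(0)=0$ is forced) one has $\rho=0$ and $G'(0)=\int_0^1 u\,\dd\Re W(u)$ is a non-degenerate Gaussian, so the forced zero is a.s.\ simple. You correctly sensed that the degenerate regime is the delicate point, but flagging it is not the same as closing it; as written, the argument that $\Zeros_\R$ may be applied continuously to $G$ is missing.
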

\begin{remark}
In Theorem~\ref{theo:local_arbitrary_cov_matrix} we can replace the fixed  $s$ by a general sequence $(s_n)_{n\in\N}$  as in Theorem~\ref{theo:local_num_zeros}, but then we have to replace the condition $s\notin \pi\Z$ with $\lim_{n\to\infty} n\dist (s_n, \pi \Z) =+\infty$  and $s\in \pi\Z$ with $\lim_{n\to\infty} n\dist (s_n, \pi \Z) = 0$. Here, we used the notation $\dist (s_n, \pi \Z) = \min\{|s_n - \pi k|\colon k\in\Z\}$.
\end{remark}

\subsection{Coefficients from a stable domain of attraction}
Let $(\xi_1,\eta_1), (\xi_2,\eta_2),\ldots$ be i.i.d.\ random vectors from the {\it strict} domain of attraction of a two-dimensional $\alpha$-stable distribution, $0<\alpha<2$. This means that there exist numbers $b_n>0$ such that
\begin{equation}\label{eq:domain_of_attraction}
\frac 1 {b_n} \left(\sum_{k=1}^n \xi_k, \sum_{k=1}^n \eta_k \right) \todistr \mathbf S_{\alpha,\nu},
\end{equation}
where $\mathbf S_{\alpha,\nu}$ is a non-degenerate two-dimensional $\alpha$-stable random vector with L\'evy measure $\nu$ and shift parameter $0$. The adjective ``strict'' is used to highlight that convergence~\eqref{eq:domain_of_attraction} holds without centering, in particular, it is assumed that $\E\xi_1=\E\eta_1=0$ if $\alpha>1$. We refer to~\cite{samorodnitsky_taqqu_book} for details on multivariate stable distributions and stable processes. Note that $\nu$ is a locally finite measure on $\R^2\setminus \{0\}$ which has the homogeneity property
$$
\nu(\lambda B) = \lambda^{-\alpha} \nu(B)
$$
for all $\lambda>0$ and all Borel sets $B\subset \R^2 \setminus \{0\}$.
In what follows we identify $\R^2$ and $\C$ via the canonical isomorphism and consider $\R^2$-valued processes as $\C$-valued and vice versa.

\begin{theorem}\label{theo:local_stable_case}
Assume that~\eqref{eq:domain_of_attraction} holds and let $s\in \R$ be fixed. Then
$$
\Zeros_{\R}\left(X_n\left(s + \frac{\cdot}{n}\right)\right) \toweak \Zeros_{\R}(Z_{\nu}(\cdot))
$$
on $\MMM_p(\R)$, where $(Z_{\nu}(t))_{t\in\R}$ is a stochastic process given by
\begin{equation} \label{eq:def_Z_nu}
Z_{\nu}(t)
=
\Im \int_0^{1} \eee^{\iii tu} {\rm d} L(u)
=
\int_0^{1} \sin (tu) {\rm d} \Re L(u)  + \int_0^{1} \cos(tu) {\rm d} \Im L(u),
\end{equation}
for $t\in\R$,
and $(L(u))_{u\in[0,1]}$ is a $\C$-valued $\alpha$-stable L\'evy process with zero drift, no Gaussian component, and the L\'{e}vy measure $\tilde \nu$ defined by
\begin{equation}\label{nu_tilde}
\tilde \nu(B) :=
\begin{cases}
\int_0^{1}\nu(\eee^{2\pi \iii y}B){\rm d}y, &\text{ if } s\notin \pi\Q,\\
\frac{1}{q}\sum_{k=1}^{q}\nu(\eee^{2\pi\iii k/q}B),&\text{ if } s=2\pi p/q,  \text{ with } p\in\Z, q\in \N \text{ coprime},
\end{cases}
\end{equation}
for all Borel sets $B\subset \C \setminus \{0\}$, with $\nu$ being the L\'{e}vy measure of $\mathbf S_{\alpha,\nu}$ in~\eqref{eq:domain_of_attraction}.
\end{theorem}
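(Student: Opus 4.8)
The plan is to mirror the strategy used for Theorems~1.1–1.4: first establish a functional limit theorem for the rescaled polynomials on a suitable space of analytic functions, then transfer the convergence to the point processes of real zeros via Hurwitz's theorem and Lemmas~\ref{lem:1},~\ref{lem:1a}. Concretely, I would write
$$
\frac{1}{b_n} X_n\!\left(s+\frac{z}{n}\right)
=
\Im \frac{1}{b_n}\sum_{k=1}^n \left(\eta_k + \iii \xi_k\right) \eee^{\iii k s}\, \eee^{\iii k z/n},
$$
and the first (and main) step is to prove that the $\C$-valued process $z \mapsto b_n^{-1}\sum_{k=1}^n (\eta_k+\iii\xi_k)\eee^{\iii ks}\eee^{\iii kz/n}$ converges weakly, on the space of entire functions with the topology of locally uniform convergence, to $z\mapsto \int_0^1 \eee^{\iii zu}\,{\rm d}L(u)$, where $L$ is the $\C$-valued $\alpha$-stable Lévy process with Lévy measure $\tilde\nu$. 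Taking imaginary parts and restricting to $z=t\in\R$ then gives~\eqref{eq:def_Z_nu}. Once this functional convergence is in hand, the passage to $\Zeros_\R$ is essentially identical to the finite-variance case and can be quoted from Section~\ref{sec:convergence_zeros}; one only has to check that $Z_\nu$ is a.s.\ not identically zero and has a.s.\ only simple real zeros, which for a nondegenerate stable process follows from absolute continuity of its finite-dimensional distributions.

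To prove the functional convergence I would proceed via convergence of the corresponding $\C$-valued random measures. Introduce the point-measure / partial-sum object $\mu_n := \sum_{k=1}^n (\eta_k+\iii\xi_k)\eee^{\iii ks}\,\delta_{k/n}$, regarded as a signed (complex) measure on $[0,1]$ after division by $b_n$; the target is the independently-scattered $\alpha$-stable random measure $L$ with control measure $\tilde\nu$. The key point is that $b_n^{-1}\sum_{k=1}^n(\eta_k+\iii\xi_k)\eee^{\iii ks}$ converges in distribution, and more is true: for any finite collection of disjoint subintervals of $[0,1]$ the corresponding blocks of the sum converge jointly to independent stable vectors. This is where the modulation by $\eee^{\iii ks}$ enters and produces the averaging in~\eqref{nu_tilde}: if $s\notin\pi\Q$ the sequence $(ks \bmod 2\pi)$ equidistributes, so a block of length $\sim cn$ sees the Lévy measure $\nu$ rotated by $\eee^{2\pi\iii y}$ and averaged over $y\in[0,1]$, whereas if $s=2\pi p/q$ the rotations $\eee^{\iii ks}$ cycle through the $q$-th roots of unity and one gets the finite average $\frac1q\sum_{k=1}^q \nu(\eee^{2\pi\iii k/q}\,\cdot\,)$. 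Technically this is a triangular-array stable limit theorem: one computes, for a fixed Borel set $B$ bounded away from $0$, the limit of $\sum_{k:k/n\in I} \P[(\eta_k+\iii\xi_k)\eee^{\iii ks}/b_n \in B]$ using the domain-of-attraction hypothesis~\eqref{eq:domain_of_attraction} together with a Weyl/Cesàro averaging of the rotated tail measures, and one similarly controls the small-jump (truncated second moment) contribution and shows it is negligible; independence across disjoint blocks is automatic since the $(\xi_k,\eta_k)$ are independent. Equivalently, one can phrase this as convergence of the Lévy–Khintchine exponents: the characteristic function of $b_n^{-1}\int \theta(u)\,{\rm d}\mu_n(u)$ for a test function $\theta$ converges to $\exp\{\int_0^1\int_{\C}(\eee^{\iii\Re(\bar z w)}-1-\iii\Re(\bar z w)\ind_{|w|\le1})\,\tilde\nu({\rm d}w)\,{\rm d}u\}$ with $z=\theta(u)$, and the averaging structure of $\tilde\nu$ drops out of the equidistribution argument.

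Having convergence of the random measures $b_n^{-1}\mu_n \Rightarrow L$ in the appropriate sense (e.g.\ weak convergence of their actions on continuous test functions, jointly over finitely many tests), the functional limit theorem for $z\mapsto \int_0^1 \eee^{\iii zu}{\rm d}\mu_n(u)$ follows by a continuity/tightness argument: the map $u\mapsto \eee^{\iii zu}$ is, uniformly for $z$ in a compact subset of $\C$, a nice family of test functions, so finite-dimensional convergence in $z$ is immediate, and tightness on the space of entire functions follows from a uniform bound on $\E\,|b_n^{-1}\sum_{k\le n}(\eta_k+\iii\xi_k)\eee^{\iii ks}\eee^{\iii kz/n}|^p$ for some $p<\alpha$ on compacts — here one uses that partial sums in the domain of attraction of an $\alpha$-stable law have uniformly bounded $p$-th moments for $p<\alpha$ (a standard consequence of regular variation, cf.\ the Marcinkiewicz–Zygmund / von Bahr–Esseen type estimates), combined with equicontinuity of the analytic functions via Cauchy's estimates. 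The main obstacle I anticipate is precisely the triangular-array stable limit theorem with the rotating modulation: one must handle simultaneously the number-theoretic equidistribution of $(ks\bmod2\pi)$ (with the genuinely different behaviour in the rational and irrational cases), the joint convergence over disjoint blocks to an independently scattered measure, and the fact that the summands $(\eta_k+\iii\xi_k)\eee^{\iii ks}$ are deterministic rotations of a fixed i.i.d.\ sequence rather than genuinely i.i.d. — so classical one-dimensional domain-of-attraction criteria must be applied block by block and then reassembled. Everything downstream (analytic tightness, Hurwitz, reality and simplicity of zeros, continuous mapping) is then routine and parallels the earlier proofs in the paper.
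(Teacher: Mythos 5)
Your overall strategy coincides with the paper's: a functional limit theorem for $b_n^{-1}X_n(s+\cdot/n)$ on the space of entire functions real on $\R$, with the averaged measure $\tilde\nu$ produced by Weyl equidistribution of $(ks \bmod 2\pi)$ (uniform version, Lemma~\ref{weyl_uniform_lemma}), followed by the Hurwitz/continuous-mapping transfer of Lemmas~\ref{lem:1} and~\ref{lem:1a}. The only methodological difference is in how you pass from the modulated partial sums to the analytic functions: you propose finite-dimensional convergence plus tightness via uniform $p$-th moments with $p<\alpha$, whereas the paper first proves convergence of $L_n(t)=b_n^{-1}\sum_{k\le[nt]}(\xi_k+\iii\eta_k)\eee^{\iii ks}$ in $D([0,1],\C)$ (Lemmas~\ref{point_processes_convergence} and~\ref{processes_converge}, via point-process convergence and Doob/Karamata estimates for the truncated part) and then applies the single continuous map $f\mapsto\int_0^1\eee^{\iii zx}\,{\rm d}f(x)$ (Lemmas~\ref{main_convergence_lemma} and~\ref{continuous_mapping}), which dispenses with a separate tightness argument. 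Both routes are viable. Two small points: the correct complexification is $\xi_k+\iii\eta_k$, not $\eta_k+\iii\xi_k$; and for $\alpha\ge1$ you must show the compensator of the truncated block sums is $o(b_n)$, which the paper obtains from the rotation invariance of $\tilde\nu$ together with the geometric bound $\bigl|\sum_{k\le m}\eee^{\iii ks}\bigr|\le 2/|1-\eee^{\iii s}|$ for $s\notin2\pi\Z$.

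The one genuine gap is your last step: a.s.\ simplicity of the real zeros of $Z_\nu$ does \emph{not} follow from absolute continuity of its finite-dimensional distributions. To exclude a point $t$ with $Z_\nu(t)=Z_\nu'(t)=0$ the paper invokes Bulinskaya's criterion (Lemma~\ref{lem:no_mult_zeros_general}), which requires the densities of $Z_\nu(t)$ to be bounded \emph{uniformly} in $t$ on compacts bounded away from $0$; this is proved by a nontrivial characteristic-function estimate, $\Re\log\E\,\eee^{\iii aZ_\nu(t)}\le-\delta\,\Gamma([0,2\pi))\,|a|^\alpha$ uniformly for $\eps\le|t|\le\eps^{-1}$, where $\Gamma$ is the spectral measure of $L(1)$ and $\delta>0$ comes from a compactness argument for $(\phi,t)\mapsto t^{-1}\int_\phi^{t+\phi}|\sin v|^\alpha\,{\rm d}v$. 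Moreover $t=0$ requires separate care: if $\Gamma$ is supported on $\{0,\pi\}$, then $Z_\nu(0)=\Im L(1)=0$ a.s., so $Z_\nu(0)$ is not even absolutely continuous, and one must instead check that $Z_\nu'(0)=\int_0^1u\,{\rm d}\Re L(u)$ is nondegenerate. As stated, your justification of this step would fail; the rest of the proposal matches the paper's argument.
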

\begin{remark}\label{rem:partial_integration}
The integral in~\eqref{eq:def_Z_nu} (which need not exist in the Lebesgue--Stieltjes sense because $L$ has finite variation a.s.\ in the case $\alpha\in (0,1)$ only) is defined via integration by parts:
\begin{equation}\label{eq:def_Z_nu_partial}
\int_0^{1} \eee^{\iii tu} {\rm d} L(u) \overset{\rm{def}}{=} L(1)\eee^{\iii t}-\iii t \int_0^1 L(u)\eee^{\iii tu} {\rm d}u.
\end{equation}
See, e.g., \cite{iksanov_incr_response,samorodnitsky_taqqu_book} for the properties of such stochastic integrals.
\end{remark}
\begin{remark}
An interesting feature of Theorem~\ref{theo:local_stable_case} is that the behavior of the zeros near $s$ depends on whether $\tilde s:=s/(2\pi)$ is rational or not.  To see why such arithmetic effects show up, assume for a moment that $\xi_k$ and $\eta_k$ are independent and symmetric $\alpha$-stable. Then, $X_n(s)$ is also symmetric $\alpha$-stable with scaling parameter $\sigma_n$, where
\begin{align*}
\sigma_n^{\alpha}
=
\sum_{k=1}^n |\cos (ks)|^{\alpha} + \sum_{k=1}^n |\sin (ks)|^{\alpha}
=
\sum_{k=1}^n \left|\cos \left(2\pi \{k \tilde s\} \right)\right|^{\alpha} + \sum_{k=1}^n \left|\sin \left(2\pi \left\{k\tilde s\right\}\right)\right|^{\alpha}
\end{align*}
and $\{\cdot\}$ denotes the fractional part.
If $\tilde s$ is irrational, then the sequence $(\{k\tilde s\})_{k\in\N}$ is uniformly distributed on the interval $[0,1]$ by Weyl's equidistribution theorem, see, for example Theorem 2.1 and Example 2.1 in \cite{Kuipers+Niederreiter:1974}, whereas for rational $\tilde s = p/q$ it is uniformly distributed on the finite set $\{0,\frac 1q,\ldots, \frac{q-1}q\}$, whence
$$
\lim_{n\to\infty} \frac 1n \sigma_n^{\alpha} =
\begin{cases}
\int_{0}^1 (|\cos (2\pi u)|^{\alpha}+|\sin (2\pi u)|^{\alpha}) {\rm d} u, &\text{ if } s\notin \pi \Q,\\
\frac 1q \sum_{k=1}^q (|\cos (2\pi k/q)|^{\alpha}+|\sin (2\pi k/q)|^{\alpha}) , &\text{ if } s=2\pi p/q.
\end{cases}
$$
Note that for $\alpha=2$ (which corresponds to the finite variance case studied in Section~\ref{subsec:coeff_finite_sec_moments}), there is no difference between the rational and irrational cases because $\sin^2 t + \cos^2 t =1$.
\end{remark}
\begin{remark}
In Theorem~\ref{theo:local_stable_case} it is possible to replace the fixed $s$ by a sequence $(s_n)_{n\in\R}$ assuming that $s:= \lim_{n\to\infty} s_n$ exists and either $s\notin \pi\Q$ (the first case in~\eqref{nu_tilde}) or $s\in \pi \Q$ and $|s_n- s| = o(1/n)$ as $n\to\infty$ (the second case in~\eqref{nu_tilde}).
\end{remark}

\section{Convergence of random trigonometric polynomials as random analytic functions}\label{sec:convergence_polynomials}
\subsection{Spaces of analytic functions and analytic continuations of the processes $Z$, $G$ and $Z_{\nu}$}
Let $\mathcal H$ be the space of functions which are analytic on the entire complex plane.  We endow $\mathcal H$ with the topology of uniform convergence on compact sets. This topology is generated by the complete separable metric
$$
d(f,g) = \sum_{k\geq 1} \frac 1{2^k} \frac{\|f-g\|_{\bar {\mathbb D}_{k}}}{1+ \|f-g\|_{\bar {\mathbb D}_{k}}},
$$
where $\bar {\mathbb D}_r = \{|z|\leq r\}$ is the closed disk or radius $r>0$ around the origin, and $\|f\|_K=\sup_{z\in K} |f(z)|$ is the $\sup$-norm of $f$ on a compact set $K\subset \C$; see~\cite[pp.~151--152]{conway_book}. A random analytic function is a random element taking values in the space $\mathcal H$ endowed with the Borel $\sigma$-algebra. We refer to~\cite{ben_hough_book} and~\cite{shirai} for more information on random analytic functions.

Let $\mathcal H_{\R}$ be a closed subspace of $\mathcal H$ consisting of all functions $f\in \mathcal H$ which take real values on $\R$. Note that for every $f\in \mathcal H_{\R}$ we have $f(\bar z) = \overline {f(z)}$ for all $z\in\C$. Indeed, the functions $f(z)$ and $\overline {f(\bar z)}$ are analytic and coincide on $\R$. Hence, they must coincide everywhere on $\C$ by the uniqueness theorem for analytic functions. The space $\mathcal H_{\R}$ is endowed with the induced topology and metric.

Following the approach outlined in the introduction, we shall show that convergence~\eqref{eq:fclt_basic} and its counterparts in the case of correlated $(\xi_1,\eta_1)$ and in the stable case hold weakly on the space $\mathcal{H}_{\R}$. But, first of all, we have to construct analytic continuations of the limit processes $Z$, $G$ and $Z_{\nu}$ appearing in Theorems~\ref{theo:local_num_zeros}, Theorems~\ref{theo:local_arbitrary_cov_matrix} and~\ref{theo:local_stable_case}, respectively.

\subsubsection{The process $Z$}\label{subsec:proc_Z}
The stationary Gaussian process $(Z(t))_{t\in\R}$ appearing in Theorem~\ref{theo:local_num_zeros} can be extended analytically to the complex plane using the representation
\begin{equation}\label{eq:Gauss_proc_2}
Z(t) = \sum_{k\in\Z} \sinc (t- \pi k) N_k, \quad t\in\C,
\end{equation}
where $(N_k)_{k\in\Z}$ are i.i.d.\ real standard Gaussian random variables. The series in~\eqref{eq:Gauss_proc_2} converges uniformly on compact subsets of $\C$ because so does the series $\sum_{k\in\Z} |\sinc (t- \pi k)|^2$; see~\cite[Lemma~2.2.3]{ben_hough_book}. It follows that $(Z(t))_{t\in\C}$ is an analytic function on $\C$ with probability $1$.

The $\R^2$-valued process $((\Re Z(t), \Im Z(t)))_{t\in\C}$ is jointly real Gaussian in the sense that for all $t_1,\ldots,t_d\in\C$, the $2d$-dimensional random vector
$$
(\Re Z(t_1),\Im Z(t_1),\ldots, \Re Z(t_d), \Im Z(t_d))
$$
is real Gaussian. Clearly, $\E Z(t)=0$ for all $t\in\C$. The covariance structure of $(Z(t))_{t\in\C}$ is given by
\begin{align}
\E [Z(t) Z(s)] = \sinc (t-s), \quad t,s\in\C, \label{eq:cov_Z1} \\ 
\E [Z(t) \overline{Z(s)}] = \sinc (t-\bar s), \quad t,s\in\C.  \label{eq:cov_Z2} 
\end{align}
For instance, in the case when $t,s\notin \pi\Z $, we have
\begin{align*}
\E [Z(t)\overline{Z(s)}]
&=
\sum_{k\in\Z} \frac{\sin (t-\pi k)}{t-\pi k} \overline{\left(\frac{\sin (s-\pi k)}{s-\pi k}\right)}\\
&=
(\sin t) (\sin \bar s) \sum_{k\in\Z} \frac{1}{(t-\pi k)(\bar s-\pi k)}\\
&=
\frac {(\sin t) (\sin \bar s)}{\bar s-t} \sum_{k\in\Z} \left(\frac{1}{t-\pi k} - \frac 1{\bar s-\pi k}\right)\\
&=
\frac {(\sin t) (\sin \bar s)}{\bar s-t} (\cot t-\cot \bar s)\\
&=
\frac{\sin (t-\bar s)}{t-\bar s},
\end{align*}
where we used the partial fraction expansion of the cotangent. In the case when $t=\pi j$ for some $j\in\Z$, we have
$$
\E [Z(t) \overline{Z(s)}]
=
\sum_{k\in\Z} \sinc (t-\pi k) \overline{\sinc (s-\pi k)}
=
\overline{\sinc(s-\pi j)}
=
\sinc(t-\bar s)
$$
because $\sinc (t-\pi k)=1$ for $k=j$ and $0$ for $k\neq j$. The proof of~\eqref{eq:cov_Z1} is similar.

Representation~\eqref{eq:Gauss_proc_2} appeared, for example, in~\cite{antezana_etal}. Note that the analytically continued process $(Z(t))_{t\in\C}$ is stationary with respect to shifts along the real axis, but it is not stationary with respect to shifts along the imaginary axis.

\subsubsection{The process $G$} In the case $s\notin\pi\Z$ we can simply take
$$
G(t):=\sqrt{\frac {\sigma_1^2+\sigma_2^2} 2} Z(t), \quad t\in\C,
$$
where the process $(Z(t))_{t\in\C}$ is the same as in Section~\ref{subsec:proc_Z}. In the case $s\in\pi\Z$ take a centered $\C$-valued Brownian motion $(W(u))_{u\in [0,1]}$ with covariance structure
$$
\E [(\Re W(1))^2] = \sigma_1^2, \quad \E [(\Im W(1))^2] = \sigma_2^2, \quad \E [(\Im W(1))(\Re W(1))] = \rho,
$$
and put
\begin{align*}
U(t) = \int_0^1 \eee^{\iii tu}{\rm d}W(u), \quad t\in\C,
\end{align*}
where the integral is defined via the formal integration by parts, as in~\eqref{eq:def_Z_nu_partial}. Clearly, this defines $U$ as a random analytic function on $\C$. Now put\footnote{Here the following observation is used: if $f$ is an analytic function, so is $g(z):=(f(z)-\overline{f(\overline{z})})/(2\iii)$. Moreover, $g\in\mathcal{H}_{\R}$ and for $t\in\R$ we have $g(t)=\Im f(t)$. In particular, $G(t)=\Im U(t)$ for $t\in\R$, but, generally speaking, this relation fails for $t\in \C\setminus \R$. }
$$
G(t)=\frac{U(t)-\overline{U(\overline{t})}}{2\iii}=\int_0^1 \sin(tu){\rm d}\Re W(u)+\int_0^1 \cos(tu){\rm d}\Im W(u),\quad t\in\C.
$$
Integrating by parts and using the identities
\begin{align*}
&\int_{0}^1 \sin (t_1u) \sin (t_2u) {\rm d} u = \frac 12 \sinc (t_1-t_2) - \frac 12 \sinc (t_1+t_2),\\
&\int_{0}^1 \cos (t_1u) \cos (t_2u) {\rm d} u = \frac 12 \sinc (t_1-t_2) + \frac 12 \sinc (t_1+t_2),\\
&\int_{0}^1 \sin (t_1u) \cos (t_2u) {\rm d} u = \frac {1-\cos (t_1+t_2)}{2(t_1+t_2)}+ \frac {1-\cos (t_1-t_2)}{2(t_1-t_2)},
\end{align*}
it is easy to check that the covariance function of $(G(t))_{t\in\R}$ is given by the second line in~\eqref{eq:G_covar}.

\subsubsection{The process $Z_{\nu}$} Let $(L(u))_{u\in [0,1]}$ be a $\C$-valued $\alpha$-stable L\'evy process defined in Theorem~\ref{theo:local_stable_case}. As in the construction of $G$ above, put
\begin{equation}\label{eq:u_nu_def}
U_{\nu}(t)=\int_0^{1}\eee^{\iii tu}{\rm d}L(u),\quad t\in\C,
\end{equation}
where the integral is understood as in~\eqref{eq:def_Z_nu_partial}. Obviously, $U_{\nu}$ is a random analytic function on $\C$ and we can take
$$
Z_{\nu}(t)=\frac{U_{\nu}(t)-\overline{U_{\nu}(\overline{t})}}{2\iii}=\int_0^1 \sin(tu){\rm d}\Re L(u)+\int_0^1 \cos(tu){\rm d}\Im L(u),\quad t\in\C.
$$

\subsection{Functional limit theorems for random trigonometric polynomials}\label{subsec:FCLT_gauss}
Now we are ready to prove convergence~\eqref{eq:fclt_basic} and its counterparts corresponding to Theorems~\ref{theo:local_arbitrary_cov_matrix} and~\ref{theo:local_stable_case}.
\begin{theorem}\label{theo:FCLT}
Let $(\xi_1,\eta_1),(\xi_2,\eta_2),\ldots$ be  i.i.d.\ random vectors with zero mean and unit covariance matrix. Fix any sequence of real numbers $(s_n)_{n\in\N}$ and consider a random process $(Y_n(t))_{t\in\C}$ defined by
\begin{align}
Y_n(t)
&:= \frac 1 {\sqrt n} X_n\left(s_n + \frac tn\right)  \label{eq:def_Y_n}\\
&= \frac 1 {\sqrt n} \sum_{k=1}^n \left( \xi_k \sin \left( k\left(s_n + \frac tn\right)\right) + \eta_k \cos \left(k \left(s_n + \frac tn\right)\right)\right). \notag
\end{align}
Then $Y_n \toweak Z$ on $\mathcal H_{\R}$.
\end{theorem}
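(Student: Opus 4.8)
The plan is to prove the two classical ingredients of weak convergence on the Polish space $\mathcal H$ — convergence of finite-dimensional distributions at real points and tightness — and then to note that the first ingredient already pins down every subsequential weak limit.

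\emph{Reduction of the coefficients.} First I would rewrite $Y_n$ in a form from which $s_n$ has disappeared. By the addition theorems for $\sin$ and $\cos$,
$$
Y_n(t) = \frac 1{\sqrt n}\sum_{k=1}^n\bigl(a_k^{(n)}\cos(kt/n) + b_k^{(n)}\sin(kt/n)\bigr), \quad t\in\C,
$$
with $a_k^{(n)} := \xi_k\sin(ks_n)+\eta_k\cos(ks_n)$ and $b_k^{(n)} := \xi_k\cos(ks_n)-\eta_k\sin(ks_n)$. For each $k$ the vector $(a_k^{(n)},b_k^{(n)})$ is an \emph{orthogonal} linear image of $(\xi_k,\eta_k)$; hence it again has zero mean and identity covariance matrix, and $(a_k^{(n)})^2+(b_k^{(n)})^2 = \xi_k^2+\eta_k^2$. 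This orthogonal invariance is precisely why the limit will not depend on the sequence $(s_n)$.

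\emph{Finite-dimensional distributions on $\R$.} Fix real points $t_1,\dots,t_d$ and put $V_k^{(n)} := \bigl(a_k^{(n)}\cos(kt_j/n)+b_k^{(n)}\sin(kt_j/n)\bigr)_{j=1}^d\in\R^d$. The $V_1^{(n)},\dots,V_n^{(n)}$ are independent and centered, and a one-line computation gives $\E\bigl[V_k^{(n)}(V_k^{(n)})^\top\bigr]_{ij} = \cos\bigl(k(t_i-t_j)/n\bigr)$, so $\frac1n\sum_{k=1}^n\E\bigl[V_k^{(n)}(V_k^{(n)})^\top\bigr]_{ij}\to\int_0^1\cos((t_i-t_j)u)\,\dd u=\sinc(t_i-t_j)$ by Riemann-sum convergence, which is exactly the covariance matrix of $(Z(t_1),\dots,Z(t_d))$. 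For Lindeberg's condition one bounds $|V_k^{(n)}|\le\sqrt d\,\bigl((a_k^{(n)})^2+(b_k^{(n)})^2\bigr)^{1/2}=\sqrt d\,(\xi_k^2+\eta_k^2)^{1/2}$, so the Lindeberg sum is at most $d\,\E\bigl[(\xi_1^2+\eta_1^2)\ind_{\{\xi_1^2+\eta_1^2>\eps^2 n/d\}}\bigr]\to 0$ since $\E[\xi_1^2+\eta_1^2]<\infty$. The multivariate Lindeberg–Feller theorem then yields $(Y_n(t_1),\dots,Y_n(t_d))\todistr(Z(t_1),\dots,Z(t_d))$.

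\emph{Tightness and conclusion.} Since $\mathcal H$ is Polish, by Prokhorov's and Montel's theorems it suffices to show $\sup_n\E\|Y_n\|_{\bar{\mathbb D}_r}^2<\infty$ for every $r>0$. Here the triangle-inequality bound on $Y_n$ is too lossy — it destroys the cancellation in the sum — and the key step is to exploit subharmonicity of $|Y_n|^2$ (recall $\Delta|Y_n|^2=4|Y_n'|^2\ge 0$): the sub-mean-value inequality gives $\|Y_n\|_{\bar{\mathbb D}_r}^2\le(\pi r^2)^{-1}\int_{\bar{\mathbb D}_{2r}}|Y_n(z)|^2\,\dd A(z)$, whence by Fubini $\E\|Y_n\|_{\bar{\mathbb D}_r}^2\le(\pi r^2)^{-1}\int_{\bar{\mathbb D}_{2r}}\E|Y_n(z)|^2\,\dd A(z)$. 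Using the reduced form and $\E a_k^{(n)}=\E b_k^{(n)}=\E a_k^{(n)}b_k^{(n)}=0$, $\E(a_k^{(n)})^2=\E(b_k^{(n)})^2=1$, one finds $\E|Y_n(z)|^2=\frac1n\sum_{k=1}^n\cosh\bigl(2k\,\Im z/n\bigr)\le\cosh(4r)$ uniformly in $|z|\le 2r$ and in $n$, giving the uniform bound $4\cosh(4r)$. Tightness now produces subsequential weak limits in $\mathcal H$; each is supported on the closed subspace $\mathcal H_\R$ (portmanteau, as $Y_n\in\mathcal H_\R$ a.s.) and, by continuity of point evaluations, has the same f.d.d.\ on $\R$ as $Z$. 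Since a function of $\mathcal H_\R$ is determined by its restriction to $\mathbb Q$, the evaluation map $\mathcal H_\R\ni f\mapsto(f(q))_{q\in\mathbb Q}$ is an injective Borel map between Polish spaces, hence (Lusin–Souslin) a Borel isomorphism onto its image, so the law of an $\mathcal H_\R$-valued random element is determined by its f.d.d.\ at rational points. Therefore every subsequential limit equals $Z$ in distribution, and $Y_n\toweak Z$ on $\mathcal H$, hence on $\mathcal H_\R$. The main obstacle is the tightness estimate — more precisely, the realization that the necessary cancellation must be harvested through subharmonicity rather than the triangle inequality; the rest is a routine Lindeberg–Feller argument plus a soft identification step.
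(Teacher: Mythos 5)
Your proof is correct, and the skeleton (Lindeberg CLT for finite-dimensional distributions plus a second-moment tightness bound on $\mathcal H$) is the same as the paper's, but you execute three steps differently. First, the paper proves convergence of the finite-dimensional distributions at arbitrary \emph{complex} points $t_1,\dots,t_d$, computing both $\E[Y_n(t_i)Y_n(t_j)]\to\sinc(t_i-t_j)$ and $\E[Y_n(t_i)\overline{Y_n(t_j)}]\to\sinc(t_i-\bar t_j)$, so the limit is identified on $\mathcal H$ directly; you only treat real points and therefore must (and correctly do) add the Lusin--Souslin argument that the law of a random analytic function is determined by its evaluations on $\Q$. Second, for tightness the paper simply cites the criterion ``$\sup_n\sup_{|t|\le R}\E|Y_n(t)|^2<\infty$ implies tightness on $\mathcal H$'' from \cite{kabluchko_klimovsky1,shirai}, whereas you reprove it via subharmonicity of $|Y_n|^2$ and the sub-mean-value inequality — a welcome self-contained addition, and your pointwise bound $\E|Y_n(z)|^2=\frac1n\sum_{k=1}^n\cosh(2k\Im z/n)$ agrees with the paper's. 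Third, your preliminary rotation $(\xi_k,\eta_k)\mapsto(a_k^{(n)},b_k^{(n)})$ absorbing $s_n$ into orthogonally transformed (hence still isotropic) coefficients is not in the paper, which instead lets $s_n$ cancel inside the covariance computation $\frac1n\sum_k\cos(k(t_i-t_j)/n)$; your reduction makes the independence of the limit from $(s_n)$ conceptually transparent and slightly streamlines the Lindeberg bound, at no cost in generality.
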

\begin{proof}
The proof consists of two steps.

\vspace*{2mm}
\noindent
\emph{Convergence of finite-dimensional distributions.}
Take any $t_1,\ldots,t_d\in\C$. We have the representation $Y_n(t) = V_{n,1}(t)+\ldots+ V_{n,n}(t)$, where
$$
V_{n,k}(t) := \frac 1{\sqrt n} \left( \xi_k \sin \left(k\left(s_n + \frac tn\right)\right) + \eta_k \cos \left(k \left(s_n + \frac tn\right)\right)\right).
$$
The $d$-dimensional complex random vector $\bY_n:=(Y_n(t_1),\ldots, Y_n(t_d))$ can be represented as a sum of independent, zero mean random vectors $(V_{n,k}(t_1), \ldots, V_{n,k}(t_d))$ over $k=1,\ldots,n$. To show that $\bY_n$ converges in distribution to $\bY := (Z(t_1),\ldots, Z(t_d))$, we  shall use the Lindeberg central limit theorem. First we need to check that for all $i,j=1,\ldots,d$,
\begin{align*}
&\lim_{n\to\infty} \E[Y_n(t_i)Y_n(t_j)] = \E[Z(t_i)Z(t_j)]=\sinc (t_i-t_j),\\
&\lim_{n\to\infty} \E[Y_n(t_i)\overline{Y_n(t_j)}] = \E[Z(t_i)\overline{Z(t_j)}]=\sinc (t_i-\bar t_j).
\end{align*}
It follows from~\eqref{eq:def_Y_n} that
$$
\E[Y_n(t_i)Y_n(t_j)] = \frac 1n \sum_{k=1}^n \cos \frac {k(t_i-t_j)} n \ton \int_{0}^1 \cos (u(t_i-t_j)) \dd u = \sinc (t_i-t_j)
$$
and, similarly,
$$
\E[Y_n(t_i)\overline{Y_n(t_j)}] = \frac 1n \sum_{k=1}^n \cos \frac {k(t_i-\bar t_j)} n \ton \int_{0}^1 \cos (u(t_i-\bar t_j)) \dd u = \sinc (t_i-\bar t_j).
$$
It remains to verify the Lindeberg condition. Take some $t\in\C$. For every $\eps>0$ we need to show that
$$
\lim_{n\to\infty}\sum_{k=1}^n \E \left[|V_{n,k}(t)|^2 \ind_{\{|V_{n,k}(t)| \geq \eps\}}  \right] = 0.
$$
Using the inequalities $|z_1+z_2|^2\leq 2|z_1|^2 +2 |z_2|^2$ and $|\sin z|\leq \cosh (\Im z)$, $|\cos z|\leq \cosh (\Im z)$, we obtain that  for all $k=1,\ldots,n$,
$$
|V_{n,k}(t)|^2 \leq \frac 2n (\cosh^2 (\Im t)) \cdot (\xi_k^2 +\eta_k^2).
$$
With  $C=2\cosh^2 (\Im t)$ we get
\begin{align*}
\sum_{k=1}^n \E \left[|V_{n,k}(t)|^2 \ind_{\{|V_{n,k}(t)| \geq \eps\}}  \right]
&\leq
\frac {C}n \sum_{k=1}^n \E \left[(\xi_k^2+\eta_k^2) \ind_{\{C(\xi_k^2+\eta_k^2)\geq n\eps^2\}}  \right]\\
&=
C\E \left[(\xi_1^2+\eta_1^2) \ind_{\{\xi_1^2+\eta_1^2\geq n\eps^2/C\}}\right]
\end{align*}
which converges to $0$ as $n\to\infty$ because $\E [\xi_1^2+\eta_1^2]<\infty$.

\vspace*{2mm}
\noindent
\emph{Tightness.}
In order to prove that the sequence $(Y_n)_{n\in\N}$ is tight on $\mathcal H$, it suffices to show that for every $R>0$,
\begin{equation}\label{eq:tightness_suff_cond}
\sup_{n\in\N} \sup_{|t|\leq R} \E |Y_n(t)|^2 < \infty,
\end{equation}
see~\cite[Lemma 4.2]{kabluchko_klimovsky1} or the remark after Lemma 2.6 in~\cite{shirai}.
For all $|t|\leq R$ and $n\in\N$ we have
$$
\E |Y_n(t)|^2 = \frac 1n \sum_{k=1}^n \cos \frac {k(t-\bar t)} n = \frac 1n \sum_{k=1}^n \cosh \frac {2k (\Im t)} n \leq  \cosh (2R) <\infty
$$
because $-R\leq \frac kn \Im t\leq R$ for all $k=1,\ldots,n$.

\vspace*{2mm}
It follows that $Y_n$ converges to $Z$ weakly on $\mathcal H$, as $n\to\infty$. Since $\mathcal H_{\R}$ is a closed subset of $\mathcal H$ and all processes under consideration have their sample paths in $\mathcal H_{\R}$, the convergence holds weakly on $\mathcal H_{\R}$, as well.
\end{proof}

The next theorem provides convergence of random trigonometric polynomials under the assumptions of Theorem~\ref{theo:local_arbitrary_cov_matrix}.

\begin{theorem}\label{theo:FCLT_arbitrary_cov_matrix}
Let $s\in \R$ be fixed and define a random process $(Y_n(t))_{t\in\C}$ by
$$
Y_n(t) := \frac 1 {\sqrt n} X_n\left(s + \frac tn\right).
$$
Under assumptions of Theorem~\ref{theo:local_arbitrary_cov_matrix} we have $Y_n\toweak G$ on $\mathcal H_\R$.
\end{theorem}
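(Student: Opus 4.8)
The plan is to follow the same two-step scheme as in the proof of Theorem~\ref{theo:FCLT}: first establish convergence of finite-dimensional distributions via the Lindeberg central limit theorem, then verify the uniform second-moment bound~\eqref{eq:tightness_suff_cond} to obtain tightness on $\mathcal H$, and finally observe that all processes live in the closed subspace $\mathcal H_\R$. The tightness step is essentially unchanged: with $\sigma^2:=\sigma_1^2+\sigma_2^2<\infty$ one still has the crude bound $\E|Y_n(t)|^2 \le C(R)\,\sigma^2$ for $|t|\le R$, using $|\sin z|,|\cos z|\le\cosh(\Im z)$ together with $\E[\xi_k^2+\eta_k^2]=\sigma^2$ and the Cauchy--Schwarz inequality to control the cross term $\E[\xi_k\eta_k]$. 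Likewise the Lindeberg condition carries over verbatim: the summands $V_{n,k}(t)$ satisfy $|V_{n,k}(t)|^2\le (2/n)\cosh^2(\Im t)(\xi_k^2+\eta_k^2)$, so the truncated second moments reduce to $\E[(\xi_1^2+\eta_1^2)\ind_{\{\xi_1^2+\eta_1^2\ge cn\}}]\to0$.

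The real work is in computing the limiting covariances, and here the two cases $s\notin\pi\Z$ and $s\in\pi\Z$ diverge. Writing out
$$
\E[Y_n(t_i)\overline{Y_n(t_j)}]
=\frac1n\sum_{k=1}^n\Big(\sigma_1^2\sin(ks_i)\sin(\overline{ks_j})+\sigma_2^2\cos(ks_i)\cos(\overline{ks_j})+\rho\big(\sin(ks_i)\cos(\overline{ks_j})+\cos(ks_i)\sin(\overline{ks_j})\big)\Big),
$$
where $s_i:=s+t_i/n$, one expands the products of sines and cosines into $\cos(k(s_i-\bar s_j))$, $\cos(k(s_i+\bar s_j))$, $\sin(k(s_i\pm\bar s_j))$. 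Since $s_i\pm\bar s_j = (s\pm s)+(t_i\pm\bar t_j)/n$, the difference-type terms always have argument $k(t_i-\bar t_j)/n$ and their Riemann sums converge to $\int_0^1\cos(u(t_i-\bar t_j))\,\dd u=\sinc(t_i-\bar t_j)$ and similarly $\int_0^1\sin(u(t_i-\bar t_j))\,\dd u=(1-\cos(t_i-\bar t_j))/(t_i-\bar t_j)$. The sum-type terms have argument $k(2s+(t_i+\bar t_j)/n)$: if $s\notin\pi\Z$, then $\sum_{k=1}^n e^{2\iii ks}$ is bounded (geometric sum with ratio $e^{2\iii s}\ne1$), so after dividing by $n$ these contributions vanish, leaving only the first line of~\eqref{eq:G_covar}; if $s\in\pi\Z$, then $e^{2\iii ks}=1$ for all $k$, the sum-type terms survive, and one gets Riemann sums converging to $\int_0^1\cos(u(t_i+\bar t_j))\,\dd u$ and $\int_0^1\sin(u(t_i+\bar t_j))\,\dd u$; collecting terms and using the three integral identities displayed just before Theorem~\ref{theo:FCLT_arbitrary_cov_matrix} reproduces exactly the analytically continued covariance of $G$ constructed in Section~3.1.2. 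The computation for $\E[Y_n(t_i)Y_n(t_j)]$ is identical with $\bar t_j$ replaced by $t_j$.

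The main obstacle is purely bookkeeping rather than conceptual: one must carefully track which of the four product-of-trigonometric terms contributes to which of the $\sinc(t_i-t_j)$, $\sinc(t_i+t_j)$, and $(1-\cos(t_i+t_j))/(t_i+t_j)$ pieces, keeping the coefficients $\sigma_1^2$, $\sigma_2^2$, $\rho$ straight through the product-to-sum expansion, and then match the result against the covariance of the process $G$ defined via the stochastic integral $U(t)=\int_0^1 e^{\iii tu}\,\dd W(u)$. Since the construction in Section~3.1.2 already records that the covariance of $G$ equals the second line of~\eqref{eq:G_covar}, it suffices to show the finite-dimensional limits agree with that formula (and with the analogous formula for $\E[Z(t_i)Z(t_j)]$ in the case $s\notin\pi\Z$); the joint real-Gaussianity of the limit follows, as in Theorem~\ref{theo:FCLT}, from the Lindeberg CLT applied to arbitrary real linear combinations of $\Re Y_n(t_i)$, $\Im Y_n(t_i)$.
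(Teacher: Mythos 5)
Your proposal is correct and follows essentially the same route as the paper: Lindeberg CLT for the finite-dimensional distributions after a product-to-sum expansion of the covariances with the case split $s\in\pi\Z$ versus $s\notin\pi\Z$, plus the uniform second-moment bound~\eqref{eq:tightness_suff_cond} for tightness on $\mathcal H$ followed by restriction to the closed subspace $\mathcal H_\R$. The only cosmetic difference is that you dispose of the sum-type terms via boundedness of the geometric sum $\sum_{k=1}^n \eee^{\iii k(2s+(t_1+t_2)/n)}$ when $s\notin\pi\Z$, whereas the paper evaluates them with the closed-form identities~\eqref{eq:trig_sums}; the two are equivalent.
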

\begin{proof}
We use the same idea as in the proof Theorem~\ref{theo:FCLT}.  We have $Y_n(t)=\sum_{k=1}^n V_{n,k}(t)$ where
$$
V_{n,k}(t) :=  \frac 1 {\sqrt n} \left(\xi_k \sin \left( k\left(s + \frac tn\right)\right) + \eta_k \cos \left(k \left(s + \frac tn\right)\right)\right).
$$
We shall need  the standard trigonometric identities
\begin{equation}\label{eq:trig_sums}
\sum_{k=1}^n \cos (k\theta) = -\frac 12   + \frac{\sin \left((n+\frac 12) \theta\right)}{2\sin \frac \theta 2},
\quad
\sum_{k=1}^n \sin (k\theta) =
\frac 12 \cot \frac \theta 2   - \frac{\cos \left((n+\frac 12) \theta\right)}{2\sin \frac \theta 2}.
\end{equation}
where the case $\theta\in 2\pi\Z$ is understood by continuity. As in the proof of Theorem~\ref{theo:FCLT} the subsequent argument is divided into two steps.

\vspace*{2mm}
\noindent
\emph{Convergence of finite-dimensional distributions.}
First we prove that the covariances of $Y_n$ converge to those of $G$.
For all $t_1,t_2\in\C$, we have
\begin{align}
\E [Y_n(t_1) Y_n(t_2)] &=
\frac {\sigma_1^2} n \sum_{k=1}^n \sin\left(k \left(s+\frac{t_1}{n}\right)\right) \sin\left(k \left(s+\frac{t_2}{n}\right)\right) \label{eq:cov_Y_n_explicit}\\
&+
\frac {\sigma_2^2} n \sum_{k=1}^n \cos\left(k \left(s+\frac{t_1}{n}\right)\right) \cos\left(k \left(s+\frac{t_2}{n}\right)\right)\notag\\
&+
\frac {\rho} n \sum_{k=1}^n \sin \left(k \left(2s+\frac{t_1+t_2}{n}\right)\right).\notag
\end{align}
Denote the three terms on the right-hand side by $S_1(n), S_2(n), S_3(n)$.
Using the formula $2\sin x \sin y = \cos (x-y) - \cos (x+y)$  and then the first identity in~\eqref{eq:trig_sums} we obtain
\begin{align*}
S_1(n)
&=
\frac {\sigma_1^2} {2n} \sum_{k=1}^n \cos\left(k \frac{t_1-t_2}{n}\right)
-
\frac {\sigma_1^2} {2n} \sum_{k=1}^n \cos\left(k \left( 2s + \frac{t_1+t_2}{n}\right)\right) \\
&=
\frac {\sigma_1^2} 2 \sinc (t_1-t_2) - \frac {\sigma_1^2}{2n} \frac {\sin \left((2n+1) \left(s +\frac{t_1+t_2}{2n}\right) \right)}{2 \sin \left( s+ \frac{t_1+t_2}{2n}\right)} + o(1), \quad \text{as } n\to\infty.
\end{align*}
Sending $n\to\infty$ and considering the cases $\sin s \neq 0$ and $\sin s=0$ separately, we infer
$$
\lim_{n\to\infty} S_1(n) =
\begin{cases}
\frac {\sigma_1^2} 2 \sinc (t_1-t_2), &\text{ if } s\notin \pi\Z,\\
\frac {\sigma_1^2} 2 \sinc (t_1-t_2) - \frac {\sigma_1^2} 2 \sinc (t_1+t_2), &\text{ if } s\in \pi\Z.
\end{cases}
$$
Similarly, using the formula $2\cos x \cos y = \cos (x-y) + \cos (x+y)$ for the second sum we arrive at
$$
\lim_{n\to\infty} S_2(n) =
\begin{cases}
\frac {\sigma_2^2} 2 \sinc (t_1-t_2), &\text{ if } s\notin \pi\Z,\\
\frac {\sigma_2^2} 2 \sinc (t_1-t_2) + \frac {\sigma_2^2} 2 \sinc (t_1+t_2), &\text{ if } s\in \pi\Z.
\end{cases}
$$
Finally, in view of the second formula in~\eqref{eq:trig_sums},
\begin{align*}
S_3(n)
&=
\frac {\rho}{n} \left(\frac 12 \cot \left( s+ \frac{t_1+t_2}{2n}\right) - \frac {\cos \left((2n+1) \left(s +\frac{t_1+t_2}{2n}\right) \right)}{2 \sin \left( s+ \frac{t_1+t_2}{2n}\right)}\right).
\end{align*}
Sending $n\to\infty$ gives
$$
\lim_{n\to\infty} S_3(n) =
\begin{cases}
0, &\text{ if } s\notin \pi\Z,\\
\rho \frac {1- \cos(t_1+t_2)}{t_1+t_2}, &\text{ if } s\in \pi\Z.
\end{cases}
$$
Taking everything together and recalling the definition of the process $G$, see~\eqref{eq:G_covar}, we obtain
\begin{equation}\label{eq:cov_Y_n_converges1}
\lim_{n\to\infty} \E [Y_n(t_1) Y_n(t_2)] = \E [G(t_1) G(t_2)].
\end{equation}
Similar computation (with $t_2$ replaced by $\bar {t}_2$), yields
\begin{equation}\label{eq:cov_Y_n_converges2}
\lim_{n\to\infty} \E [Y_n(t_1) \overline{Y_n(t_2)}] = \E [G(t_1) \overline{G(t_2)}].
\end{equation}

Fix $t_1,\ldots,t_d\in\C$. In view of the convergence of the covariances established in~\eqref{eq:cov_Y_n_converges1} and~\eqref{eq:cov_Y_n_converges2}, to prove that $(Y_n(t_1),\ldots,Y_n(t_d))$ converges in distribution to $(G(t_1),\ldots,G(t_d))$ it is enough to verify the Lindeberg condition:
$$
\lim_{n\to\infty}\sum_{k=1}^n \E \left[|V_{n,k}(t)|^2 \ind_{\{|V_{n,k}(t)| \geq \eps\}}  \right] = 0,
$$
for every fixed $t\in\C$ and $\eps>0$. This can be done exactly in the same way as in the proof of Theorem~\ref{theo:FCLT}.

\vspace*{2mm}
\noindent
\emph{Tightness.} It is sufficient to check condition~\eqref{eq:tightness_suff_cond}. Starting with the equality $\E |Y_n(t)|^2 = \E [Y_n(t)Y_n(\bar t)]$ and applying~\eqref{eq:cov_Y_n_explicit} with $t_1=t=\bar t_2$, we arrive at
\begin{align*}
\E |Y_n(t)|^2
&=
\frac {\sigma_1^2} n \sum_{k=1}^n \left|\sin\left(k \left(s+\frac{t}{n}\right)\right)\right|^2
+
\frac {\sigma_2^2} n \sum_{k=1}^n \left|\cos\left(k \left(s+\frac{t}{n}\right)\right)\right|^2 \\
&+
\frac {\rho} n \sum_{k=1}^n \sin \left(k \left(2s+\frac{2\Re t}{n}\right)\right).
\end{align*}
Together with the inequalities $|\sin z|\leq \cosh (\Im z)$ and $|\cos z|\leq \cosh (\Im z)$, this implies condition~\eqref{eq:tightness_suff_cond}.
Combining pieces together, we see that $Y_n \to G$ weakly on $\mathcal H$ and hence, also on $\mathcal H_{\R}$.
\end{proof}

In the case of attraction to a stable law we have the following functional limit theorem. Since its proof is more involved than in the previous cases, it is given in the separate Section~\ref{subsec:FLT_stable_proof}.

\begin{theorem}\label{theo:FCLT_stable_case}
Fix $s\in\R$. Under the assumptions of Theorem~\ref{theo:local_stable_case},
$$
\left(\frac 1 {b_n} X_n\left(s+ \frac tn\right)\right)_{t\in\C}
\toweak
\left(
Z_{\nu}(t)
\right)_{t\in\C}
$$
on $\mathcal{H}_\R$.
\end{theorem}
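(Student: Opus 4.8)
The plan is to lift the problem to the complex plane, reduce it to a functional limit theorem in the Skorokhod space, and then apply two continuous maps. Put $Z_k := \xi_k + \iii\eta_k$, so that for real $t$
\[
\frac 1{b_n} X_n\Bigl(s+\frac tn\Bigr) = \Im\Bigl(\frac 1{b_n}\sum_{k=1}^n Z_k \eee^{\iii k(s + t/n)}\Bigr) = \Im \int_0^1 \eee^{\iii tu}\,\dd L_n(u),
\]
where $(L_n(u))_{u\in[0,1]}$ is the $\C$-valued pure-jump process $L_n(u) := b_n^{-1}\sum_{k=1}^{\lfloor nu\rfloor}\eee^{\iii ks}Z_k$, placing an atom of mass $b_n^{-1}\eee^{\iii ks}Z_k$ at $u = k/n$. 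Setting
\[
U_n(t) := \int_0^1 \eee^{\iii tu}\,\dd L_n(u) = L_n(1)\eee^{\iii t} - \iii t\int_0^1 L_n(u)\eee^{\iii tu}\,\dd u
\]
in the sense of~\eqref{eq:def_Z_nu_partial}, we obtain a random analytic function on $\C$ with $b_n^{-1}X_n(s + t/n) = \Im U_n(t)$ for $t\in\R$, whose analytic continuation to $\C$ is $Y_n(t) := (U_n(t) - \overline{U_n(\bar t)})/(2\iii)$ --- which, just as in the construction of $Z_\nu$, is precisely the entire function $t\mapsto b_n^{-1}X_n(s + t/n)$. Consequently it suffices to prove $U_n \toweak U_\nu$ on $\mathcal H$ and then apply the continuous map $\Psi\colon\mathcal H\to\mathcal H_\R$, $\Psi(f)(t) := (f(t) - \overline{f(\bar t)})/(2\iii)$, which sends $U_\nu$ to $Z_\nu$.

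The heart of the argument is the claim that $L_n\toweak L$ in the Skorokhod space $D([0,1];\C)$ with the $J_1$ topology, $L$ being the $\alpha$-stable L\'evy process from Theorem~\ref{theo:local_stable_case}. The $L_n$ are partial-sum processes of the row-wise independent, asymptotically negligible summands $b_n^{-1}\eee^{\iii ks}Z_k$, $1\le k\le n$, so by the classical limit theory for such triangular arrays (see \cite{samorodnitsky_taqqu_book, iksanov_incr_response}) the claim reduces to three computations. First, since $Z_1$ lies in the strict domain of attraction of $\mathbf S_{\alpha,\nu}$, one has $n\,\P[b_n^{-1}Z_1\in A]\to\nu(A)$ for $\nu$-continuity sets $A\subset\C\setminus\{0\}$ bounded away from the origin, and hence for $0\le u<v\le 1$ and such a set $B$,
\begin{align*}
\sum_{k=\lfloor nu\rfloor + 1}^{\lfloor nv\rfloor}\P\bigl[b_n^{-1}\eee^{\iii ks}Z_1\in B\bigr]
&= \sum_{k=\lfloor nu\rfloor + 1}^{\lfloor nv\rfloor}\P\bigl[b_n^{-1}Z_1\in\eee^{-\iii ks}B\bigr] \\
&\sim \frac 1n\sum_{k=\lfloor nu\rfloor + 1}^{\lfloor nv\rfloor}\nu(\eee^{-\iii ks}B) \longrightarrow (v-u)\,\tilde\nu(B),
\end{align*}
where the last step follows, after writing $ks = 2\pi k\tilde s$ with $\tilde s := s/(2\pi)$, from Weyl's equidistribution theorem when $\tilde s\notin\Q$ and from the equidistribution of $(\{k\tilde s\})_k$ on $\{0,1/q,\dots,(q-1)/q\}$ when $\tilde s = p/q$; the substitution $y\mapsto 1-y$ (respectively reindexing modulo $q$) then matches it with the L\'evy measure $\tilde\nu$ in~\eqref{nu_tilde}. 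Second, the limit has no Gaussian component because $Z_1$ is attracted to a non-Gaussian law, which gives $\limsup_{\eps\downarrow 0}\limsup_{n\to\infty} b_n^{-2}\sum_{k=1}^n\Var\bigl(\Re(\bar w\,\eee^{\iii ks}Z_1)\ind_{\{|Z_1|\le\eps b_n\}}\bigr) = 0$ for every $w\in\C$. Third, there is no drift: the truncated mean of $L_n(v)$ equals $b_n^{-1}\,\E[Z_1\ind_{\{|Z_1|\le\eps b_n\}}]\sum_{k\le nv}\eee^{\iii ks}$, which tends to $0$ because $|\sum_{k\le nv}\eee^{\iii ks}|$ is bounded when $s\notin 2\pi\Z$ while $b_n^{-1}\E[Z_1\ind_{\{|Z_1|\le\eps b_n\}}]\to 0$, and when $s\in 2\pi\Z$ the partial sums $b_n^{-1}\sum_{k\le nv}Z_k$ converge with no centering at all by the strictness assumption. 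These three facts yield $L_n\toweak L$ in $D([0,1];\C)$, the required functional tightness being part of this classical statement.

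It remains to transfer this convergence to $\mathcal H$. The integration-by-parts functional $\Phi\colon D([0,1];\C)\to\mathcal H$ defined by $\Phi(g)(t) := g(1)\eee^{\iii t} - \iii t\int_0^1 g(u)\eee^{\iii tu}\,\dd u$ is continuous: if $g_n\to g$ in $J_1$, then $g_n(1)\to g(1)$ because the approximating time changes fix the endpoints of $[0,1]$, and $\int_0^1|g_n(u) - g(u)|\,\dd u\to 0$ by a routine dominated-convergence argument after $\int_0^1|g_n - g|\,\dd u$ is rewritten through the time changes; hence $\sup_{|t|\le R}|\Phi(g_n)(t) - \Phi(g)(t)| \le \eee^R|g_n(1) - g(1)| + R\eee^R\int_0^1|g_n(u) - g(u)|\,\dd u \to 0$ for every $R > 0$. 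By the continuous mapping theorem, $U_n = \Phi(L_n)\toweak\Phi(L) = U_\nu$ on $\mathcal H$; applying the continuous map $\Psi$ above gives $Y_n = \Psi(U_n)\toweak\Psi(U_\nu) = Z_\nu$ on $\mathcal H_\R$, which is the assertion of Theorem~\ref{theo:FCLT_stable_case}.

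The main obstacle is the second step --- the functional limit theorem $L_n\toweak L$, and within it the correct identification of the averaged L\'evy measure $\tilde\nu$ together with the verification that no spurious drift survives in the regimes $0<\alpha\le 1$ (and the delicate case $\alpha = 1$). This is exactly where the oscillation of $\sum_k\eee^{\iii ks}$ for $s\notin 2\pi\Z$ and the strictness of the domain of attraction for $s\in 2\pi\Z$ are used. A smaller but genuine technical point is the continuity of $\Phi$ at c\`adl\`ag paths with jumps, i.e.\ the passage from $J_1$ convergence to $L^1([0,1])$ convergence, which should be stated carefully. As an alternative one could bypass $D([0,1])$ altogether and prove convergence of the finite-dimensional distributions of $U_n$ via characteristic functions, combined with tightness on $\mathcal H$ obtained from a maximal inequality for $\sup_{u\in[0,1]}|L_n(u)|$; this route avoids the Skorokhod topology but reuses essentially the same estimates.
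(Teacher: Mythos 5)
Your proposal is correct and follows essentially the same route as the paper: reduce to the functional limit theorem $L_n\toweak L$ in $D([0,1],\C)$ (identified via Weyl equidistribution of $(ks \bmod 2\pi)_k$, with the drift killed by the boundedness of $\sum_k\eee^{\iii ks}$ for $s\notin 2\pi\Z$), then push forward through the continuous integration-by-parts map into $\mathcal H$ and the map $f\mapsto (f(t)-\overline{f(\bar t)})/(2\iii)$ into $\mathcal H_\R$. The paper packages the triangular-array step as point-process convergence plus a Doob/Karamata estimate for the truncated centered sums, but this is the same argument you invoke as the classical three-condition criterion.
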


\subsection{Proof of Theorem~\ref{theo:FCLT_stable_case}}\label{subsec:FLT_stable_proof}
We start with a well-known observation, see \cite{Resnick:1986}, that~\eqref{eq:domain_of_attraction} implies that the distribution of $(\xi_1,\eta_1)$ varies regularly in $\R^2$ with the limit measure $\nu$, which, in turn, is equivalent to the vague convergence
\begin{equation}\label{domain1}
n\P[b_n^{-1}(\xi+\iii \eta)\in\cdot]\tovague \nu(\cdot)
\end{equation}
on $\overline{\C}\setminus\{0\}$. Here, $\overline{\C}:=\C\cup \{+\infty\}$ denotes the Riemann sphere, and $\overline{\C}\setminus\{0\}$ is the Riemann sphere with the punctured south pole. These spaces can be identified with  $\overline{\R^2}:= \R^2\cup\{\infty\}$ (the one-point compactification of $\R^{2}$) and $\overline{\R^2}\setminus \{0\}$, respectively. The measure $\nu$ is thought of as a measure on $\overline{\C}\setminus\{0\}$ by setting $\nu(\{\infty\})=0$.

The proof of Theorem~\ref{theo:FCLT_stable_case} is presented in the series of lemmas.

\begin{lemma}\label{point_processes_convergence}
Fix $s\in\R$ and define a sequence of point processes on $[0,\infty)\times (\overline{\C}\setminus\{0\})$ as follows:
$$
N_n:=\sum_{k\geq 1}\delta_{\left(\frac{k}{n},\frac{\xi_k+\iii\eta_k}{b_n}\eee^{\iii ks}\right)},\quad n\in\N.
$$
Then
\begin{equation}\label{eq:convergence_point_proc_stable_case}
N_n\tovague N_{\infty}
\end{equation}
on $M_p([0,\infty)\times (\overline{\C}\setminus\{0\}))$, where $N_{\infty}$ is a Poisson point process on $[0,\infty)\times (\overline{\C}\setminus\{0\})$ with intensity measure $\mathbb{LEB}\times \tilde \nu$, and $\tilde \nu$ is as in~\eqref{nu_tilde}.
\end{lemma}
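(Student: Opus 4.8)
The plan is to invoke the standard machinery for convergence of Poisson-like point processes built from triangular arrays of independent marks (see Resnick's book \cite{resnick_book}), combined with an equidistribution argument to identify the limit intensity. First I would observe that for each fixed $n$, the points of $N_n$ indexed by $k=1,\dots,n$ are independent (the $k\geq n+1$ points escape to the ``boundary'' $[0,\infty)\times\{0\}$ and are irrelevant for vague convergence on $[0,\infty)\times(\overline{\C}\setminus\{0\})$, so one may truncate at $k\le n$). Thus $N_n$ restricted to any relatively compact set is a superposition of independent point masses, and by the classical criterion it converges vaguely to a Poisson process iff (i) the expected measures converge vaguely, i.e. $\E N_n \tovague \mathbb{LEB}\times\tilde\nu$, and (ii) a Lindeberg-type negligibility (infinitesimal array) condition holds, namely $\max_{1\le k\le n}\P[(\tfrac kn,\tfrac{\xi_k+\iii\eta_k}{b_n}\eee^{\iii ks})\in K]\to 0$ for every relatively compact $K$ bounded away from $\{0\}$. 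Condition (ii) is immediate from \eqref{domain1}, since $n\P[b_n^{-1}(\xi_1+\iii\eta_1)\in\cdot]$ converges vaguely and hence each individual probability is $O(1/n)$ uniformly over $K$.

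The heart of the matter is condition (i), the vague convergence of the mean measures. Since the points are i.i.d.\ up to the deterministic rotation $\eee^{\iii ks}$ and the deterministic first coordinate $k/n$, we have for a product test function $g(x)h(z)$ with $g\in C_c([0,\infty))$, $h\in C_c(\overline{\C}\setminus\{0\})$,
$$
\E\!\left[\int g\,h\,\dd N_n\right]=\sum_{k=1}^{n} g\!\left(\tfrac kn\right)\,\E\!\left[h\!\left(\tfrac{\xi_1+\iii\eta_1}{b_n}\eee^{\iii ks}\right)\right].
$$
By \eqref{domain1}, $\E[h(b_n^{-1}(\xi_1+\iii\eta_1)\eee^{\iii\theta})] = \tfrac1n\int h(\eee^{\iii\theta}z)\,\nu(\dd z)+o(1/n)$, the $o(1/n)$ being uniform in $\theta$ because the rotated test functions $h(\eee^{\iii\theta}\cdot)$ form a compact family in $C_c$. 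Writing $\theta=ks$ and $\theta = 2\pi\{k\tilde s\}$ with $\tilde s=s/(2\pi)$, the sum becomes a Riemann-type sum
$$
\frac1n\sum_{k=1}^{n} g\!\left(\tfrac kn\right) \Phi\!\left(\{k\tilde s\}\right)+o(1),\qquad \Phi(y):=\int h(\eee^{2\pi\iii y}z)\,\nu(\dd z),
$$
where $\Phi$ is continuous and $1$-periodic. The remaining step is precisely a weighted equidistribution statement: if $\tilde s\notin\Q$, Weyl's theorem gives $\frac1n\sum_{k=1}^n g(k/n)\Phi(\{k\tilde s\})\to\int_0^\infty g(x)\dd x\int_0^1\Phi(y)\dd y$, which reproduces the first line of \eqref{nu_tilde}; if $\tilde s=p/q$ in lowest terms, $\{k\tilde s\}$ cycles through $\{0,1/q,\dots,(q-1)/q\}$, so the limit is $\int_0^\infty g\,\dd x\cdot\tfrac1q\sum_{j=0}^{q-1}\Phi(j/q)$, giving the second line of \eqref{nu_tilde} (after relabelling $j\leftrightarrow k$ and noting $\nu(\eee^{2\pi\iii\cdot 0}B)=\nu(\eee^{2\pi\iii q/q}B)$). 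Here one should take a little care that the joint Weyl sum $\frac1n\sum g(k/n)\Phi(\{k\tilde s\})$ genuinely factorizes in the limit; this follows from equidistribution of $(k/n\bmod 1,\ k\tilde s\bmod 1)$ in the unit square, or more elementarily by partitioning $\{1,\dots,n\}$ into blocks on which $g(k/n)$ is nearly constant and applying one-dimensional equidistribution on each block.

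By a standard approximation argument, vague convergence of $\E N_n$ tested against product functions extends to all of $C_c([0,\infty)\times(\overline{\C}\setminus\{0\}))$ (products are dense, or one uses the $\pi$-$\lambda$ theorem on rectangles), so (i) holds and the lemma follows. I expect the main obstacle to be the bookkeeping in the equidistribution step: making the passage from the discrete sum over $k\le n$ to the double integral fully rigorous in the irrational case, uniformly in the test function $h$, and correctly matching the indexing conventions (fractional parts, the factor $\eee^{2\pi\iii k/q}$ versus $\eee^{\iii ks}$) with the statement of \eqref{nu_tilde}. The probabilistic input — the infinitesimality condition and the Poisson limit criterion — is entirely routine given \eqref{domain1}.
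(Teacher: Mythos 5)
Your proposal is correct and follows essentially the same route as the paper: convergence of the mean measures via a Weyl equidistribution argument made uniform by an equicontinuity (Arzel\`a--Ascoli-type) consideration, split according to whether $s/(2\pi)$ is rational, plus the uniform negligibility condition, feeding into the standard Poisson convergence criterion for superpositions of independent points (Resnick's Proposition~3.1). The only slip is cosmetic: the terms with $k>na$ are irrelevant not because those points approach $\{0\}$ in the second coordinate, but because their first coordinates $k/n$ leave the (compact) support of the test function in $[0,\infty)$.
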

\begin{proof}
Define the sequence $(\lambda_n)_{n\in\N}$ of measures on $[0,\infty)\times (\overline{\C}\setminus\{0\})$ as follows:
$$
\lambda_n({\rm d}x,{\rm d}z):=\sum_{k\geq 1}\delta_{k/n}({\rm d}x)\P[b_n^{-1}(\xi_k+\iii\eta_k)\eee^{\iii ks}\in {\rm d}z],\quad n\in\N,
$$
and let us show that
\begin{equation*}
\lambda_n\tovague
\mathbb{LEB}\times \tilde \nu
\end{equation*}
on $[0,\infty)\times (\overline{\C}\setminus\{0\})$. To this end, fix a continuous function $f:[0,\infty)\times (\overline{\C}\setminus\{0\})\to\R^{+}$ with compact support and pick $a,r>0$ such that $f(x, z)=0$ if $x>a$ or $|z|<r$.

\vspace*{2mm}
\noindent
{\sc Case $s\notin \pi\Q $.} We have to check that
$$
\sum_{k\geq 1}\int_{|z|\geq r}f(k/n,z)\P[b_n^{-1}(\xi_k+\iii\eta_k)\eee^{\iii ks}\in {\rm d}z]\ton \int_{|z|\geq r}\int_{0}^{a}f(x,z){\rm d}x\tilde \nu({\rm d}z).
$$
The left-hand side of the latter relation equals
$$
\int_{|z|\geq r}\left(\frac{1}{n}\sum_{k\geq 1}f(k/n,\eee^{\iii ks}z)\right)\left(n\P[b_n^{-1}(\xi_1+\iii\eta_1)\in {\rm d}z]\right),
$$
and, in view of equation~\eqref{weyl_uniform_convergence_irrational} in Lemma~\ref{weyl_uniform_lemma} in the Appendix and~\eqref{domain1}, converges to
\begin{align*}
\int_{|z|\geq r}\int_0^a \int_0^{1}f(x,\eee^{2\pi \iii y} z){\rm d}y{\rm d}x\nu({\rm d}z)
&=\int_0^{1}\int_{|z|\geq r}\int_0^a f(x, z){\rm d}x\nu(\eee^{-2\pi \iii y}{\rm d}z){\rm d}y\\
&=\int_{|z|\geq r}\int_0^a f(x,z){\rm d}x\tilde \nu({\rm d}z).
\end{align*}

\vspace*{2mm}
\noindent
{\sc Case $s=2\pi p/q$} follows analogously from~\eqref{weyl_uniform_convergence_rational} in Lemma~\ref{weyl_uniform_lemma}.

\vspace*{2mm}
The rest of the proof mimics the proof of Proposition~3.1 in \cite{Resnick:1986}. The only place which has to be checked is relation~(3.3) of the cited paper, which in our situation reads
$$
\lim_{n\to\infty}\sup_{k\geq 1}\P[b_n^{-1}(\xi_1+\iii\eta_1)\eee^{\iii ks}\in A]=0,
$$
where $A$ is a compact subset of $\overline{\C}\setminus\{0\}$. But this is obvious, since, by~\eqref{domain1},
$$
\sup_{k\geq 1}\P[b_n^{-1}(\xi_1+\iii\eta_1)\eee^{\iii ks}\in A]\leq \P[b_n^{-1}|\xi_1+\iii\eta_1|\in \{|z|\colon z\in A\} ]\ton 0.
$$
The proof of Lemma~\ref{point_processes_convergence} is complete.
\end{proof}

In what follows $D([0,\,1],\C)$ is the Skorokhod space of complex-valued functions defined on the interval $[0,\,1]$ which are right-continuous on $[0,\,1)$ and have finite limits from the left on $(0,\,1]$. The space $D([0,\,1],\C)$ is endowed with the usual $J_1$-topology; see~\cite{billingsley_book}.

\begin{lemma}\label{processes_converge}
Fix $s\in\R$ and define a sequence of $\C$-valued processes
\begin{equation}\label{eq:L_n_def}
L_n(t):=\frac 1 {b_n}\sum_{k=1}^{[nt]}(\xi_k+\iii\eta_k)\eee^{\iii ks}, \quad t\in[0,\,1].
\end{equation}
Then,
\begin{equation}\label{irrational_convergence_processes}
(L_n(t))_{t\in[0,\,1]}\toweak (L(t))_{t\in[0,\,1]}
\end{equation}
on $D([0,\,1],\C)$, where the L\'{e}vy process $L$ is the same as in Theorem~\ref{theo:local_stable_case}.
\end{lemma}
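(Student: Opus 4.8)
The plan is to deduce~\eqref{irrational_convergence_processes} from the point process convergence in Lemma~\ref{point_processes_convergence} by the classical summation-functional technique, see~\cite{Resnick:1986} and~\cite{billingsley_book}. Write $\zeta_{n,k}:=b_n^{-1}(\xi_k+\iii\eta_k)\eee^{\iii ks}$, so that $L_n(t)=\sum_{k=1}^{[nt]}\zeta_{n,k}$ and $N_n=\sum_{k\ge1}\delta_{(k/n,\,\zeta_{n,k})}$, and note that $|\zeta_{n,k}|=b_n^{-1}|\xi_k+\iii\eta_k|$ does not depend on $k$. For $\delta>0$ with $\tilde\nu(\{|z|=\delta\})=0$, decompose $L_n=A_n^{(\delta)}+B_n^{(\delta)}$ into a large-jump and a small-jump part,
\[
A_n^{(\delta)}(t):=\sum_{k=1}^{[nt]}\zeta_{n,k}\ind_{\{|\zeta_{n,k}|>\delta\}},
\qquad
B_n^{(\delta)}(t):=\sum_{k=1}^{[nt]}\zeta_{n,k}\ind_{\{|\zeta_{n,k}|\le\delta\}}.
\]

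First I would treat the large-jump part via the continuous mapping theorem. The summation functional $T_\delta\colon m\mapsto\bigl(t\in[0,1]\mapsto\sum\{z\colon(x,z)\in m,\ x\le t,\ |z|>\delta\}\bigr)$ is measurable on $\MMM_p([0,\infty)\times(\overline{\C}\setminus\{0\}))$ and is continuous at every $m$ whose restriction to $[0,1]\times\{|z|>\delta\}$ consists of finitely many atoms with pairwise distinct first coordinates lying in $(0,1)$ and none on $\{|z|=\delta\}$. Since $N_\infty$ is Poisson with intensity $\mathbb{LEB}\times\tilde\nu$ and $\tilde\nu$ charges neither $\{|z|=\delta\}$ nor $\{\infty\}$, these conditions hold $N_\infty$-almost surely, and as $A_n^{(\delta)}=T_\delta(N_n)$, the continuous mapping theorem applied to~\eqref{eq:convergence_point_proc_stable_case} yields $A_n^{(\delta)}\toweaknon T_\delta(N_\infty)$ on $D([0,\,1],\C)$, the limit being the $\C$-valued compound Poisson process with L\'evy measure $\tilde\nu\ind_{\{|z|>\delta\}}$. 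If $\alpha\in(0,1)$ no further centering is required. If $\alpha\in(1,2)$ one subtracts the deterministic drift $\E A_n^{(\delta)}$: the equidistribution of the phases $(\eee^{\iii ks})_k$ used in the proof of Lemma~\ref{point_processes_convergence} (equivalently, Lemma~\ref{weyl_uniform_lemma}) together with the regular variation~\eqref{domain1} shows that $\E A_n^{(\delta)}(t)\to\E[T_\delta(N_\infty)(t)]$ uniformly on $[0,1]$, whence $A_n^{(\delta)}-\E A_n^{(\delta)}\toweaknon T_\delta(N_\infty)-\E[T_\delta(N_\infty)]$. Letting $\delta\downarrow0$ and using the L\'evy--It\^o construction, the process $T_\delta(N_\infty)$---compensated when $\alpha>1$---converges on $D([0,\,1],\C)$ to the L\'evy process $L$ of Theorem~\ref{theo:local_stable_case}. (When $s\in2\pi\Z$ the rotation $\eee^{\iii ks}$ is trivial, $\tilde\nu=\nu$, and the assertion is the classical Skorokhod functional limit theorem for partial sums in a stable domain of attraction; the borderline exponent $\alpha=1$ is handled in the same way but with the customary truncation at a fixed level in the centering, which is compatible with the zero-shift convention because $(\xi_1,\eta_1)$ lies in the \emph{strict} domain of attraction.)

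It remains to show that the small-jump part is uniformly negligible as $\delta\downarrow0$. For $\alpha\in(0,1)$ a first-moment estimate does it, $\E\sup_{t\in[0,1]}|B_n^{(\delta)}(t)|\le n\,b_n^{-1}\E[|\xi_1+\iii\eta_1|\ind_{\{|\xi_1+\iii\eta_1|\le\delta b_n\}}]$, which by~\eqref{domain1} and Karamata's theorem is $O(\delta^{1-\alpha})$ uniformly in $n$. For $\alpha\in(1,2)$ one centers: for fixed $n$ the random variables $\zeta_{n,k}\ind_{\{|\zeta_{n,k}|\le\delta\}}-\E[\zeta_{n,k}\ind_{\{|\zeta_{n,k}|\le\delta\}}]$, $k=1,\dots,n$, are independent with zero mean, so Doob's $L^2$-inequality gives
\[
\E\sup_{t\in[0,1]}\bigl|B_n^{(\delta)}(t)-\E B_n^{(\delta)}(t)\bigr|^2\le C\,n\,b_n^{-2}\,\E\bigl[|\xi_1+\iii\eta_1|^2\ind_{\{|\xi_1+\iii\eta_1|\le\delta b_n\}}\bigr]=O(\delta^{2-\alpha})
\]
uniformly in $n$, while $\E A_n^{(\delta)}+\E B_n^{(\delta)}=\E L_n\equiv0$ (since $\E\xi_1=\E\eta_1=0$ for $\alpha>1$), so that $L_n=(A_n^{(\delta)}-\E A_n^{(\delta)})+(B_n^{(\delta)}-\E B_n^{(\delta)})$. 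Hence in all cases $\lim_{\delta\downarrow0}\limsup_{n\to\infty}\P[\sup_{t\in[0,1]}|B_n^{(\delta)}(t)-c_n^{(\delta)}(t)|>\eps]=0$ for every $\eps>0$, where $c_n^{(\delta)}\equiv0$ for $\alpha\in(0,1)$, $c_n^{(\delta)}=\E B_n^{(\delta)}$ for $\alpha\in(1,2)$, and the case $\alpha=1$ is treated analogously with a fixed truncation level. Since $d_{J_1}(f,g)\le\|f-g\|_{\infty}$ on $D([0,\,1],\C)$, combining the two previous steps via the standard ``convergence together'' lemma (\cite[Theorem~3.2]{billingsley_book}) yields $L_n\toweaknon L$ on $D([0,\,1],\C)$. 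I expect the genuinely delicate step to be the identification of the centering in the regime $1\le\alpha<2$: one must verify that the deterministic drifts produced by the truncation reconstitute exactly the L\'evy--It\^o compensator of the zero-drift process $L$, which is precisely where the equidistribution of $(\eee^{\iii ks})_k$ and the rotational averaging~\eqref{nu_tilde} defining $\tilde\nu$ re-enter, with the case $\alpha=1$ calling for the usual care with the $\alpha=1$ L\'evy--Khintchine compensation.
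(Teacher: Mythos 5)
Your large-jump/small-jump decomposition with the summation functional is a legitimate route and is essentially the classical Resnick scheme; the paper follows the same philosophy but packages it differently, invoking a ready-made criterion (Theorem 3.1 in \cite{Turan-Kaminska:2010}) which, given the point-process convergence of Lemma~\ref{point_processes_convergence}, reduces the whole lemma to the single condition~\eqref{doublelimit} on the truncated sums plus compensator. The substantive difference lies in how the centering is handled --- exactly the step you flag as ``delicate'' and defer. The paper needs no case distinction on $\alpha$ because of two observations you do not use: (i) for $s\notin 2\pi\Z$ the measure $\tilde\nu$ is invariant under a non-trivial rotation, so $\int_{\{\varepsilon<|z|\le 1\}}z\,\tilde\nu({\rm d}z)=0$ for every $\varepsilon$, i.e.\ the limiting compensator vanishes identically; (ii) the finite-$n$ centering of the truncated sums is killed by the elementary geometric-sum bound
\begin{equation*}
\sup_{0\le t\le 1}\Bigl|\E\bigl[(\xi+\iii\eta)\ind_{\{|\xi+\iii\eta|\le b_n\varepsilon\}}\bigr]\sum_{k=1}^{[nt]}\eee^{\iii ks}\Bigr|\le \frac{2\varepsilon b_n}{|1-\eee^{\iii s}|},
\end{equation*}
valid for every $\alpha\in(0,2)$, including $\alpha=1$, since $|\E[W\ind_{\{|W|\le c\}}]|\le c$. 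Combined with the Doob--Karamata estimate for the centered truncated sums (which you also use), these two facts settle the compensator identification in one stroke; the case $s\in 2\pi\Z$ is then dispatched by citing the classical i.i.d.\ functional limit theorem, where the ``strict'' domain-of-attraction hypothesis guarantees that no centering is required. As written, your argument has a genuine hole at $\alpha=1$: there $\E A_n^{(\delta)}$ need not exist (the large jumps may fail to be integrable), so the identity $\E A_n^{(\delta)}+\E B_n^{(\delta)}=\E L_n=0$ on which your $\alpha>1$ bookkeeping rests is unavailable, and the matching of the truncation-induced drifts with the zero-drift convention for $L$ is asserted rather than proved. Observations (i) and (ii) are precisely what close that gap.
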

\begin{proof}
If $s\in 2\pi\Z$, then~\eqref{irrational_convergence_processes} is just a functional limit theorem for i.i.d.\ vectors corresponding to~\eqref{eq:domain_of_attraction}.
Let us assume that $s\notin 2\pi\Z $, which means that the vectors are independent but not identically distributed.  We shall use a criterion for functional convergence given in Theorem 3.1 in \cite{Turan-Kaminska:2010}. In view of Lemma~\ref{point_processes_convergence} we need to check that, for every $\delta>0$,
\begin{equation}\label{doublelimit}
\lim_{\varepsilon\to 0}\limsup_{n\to\infty}\P\left[\sup_{0\leq t\leq 1}
\left|b_n^{-1}\sum_{k=1}^{[nt]}(\xi_k+\iii\eta_k)\eee^{\iii k s}\ind_{\{|\xi_k+\iii\eta_k|\leq b_n\varepsilon\}}+t\int_{\varepsilon< |z|\leq 1 }z\tilde \nu({\rm d}z)\right|\geq \delta\right]=0.
\end{equation}
It follows from the definition of $\tilde \nu$ that it is invariant under the transformations $z\mapsto z \eee^{2\pi \iii \theta}$, where $\theta\in\R$ (if $s\notin \pi\Q$) and $\theta\in q^{-1}\Z$ (if $s = 2\pi p/q$).
Since we assume $s\notin 2\pi\Z$,  this transformation group contains at least one non-trivial rotation which implies that
\begin{equation}\label{rotation_invariant}
\int_{\{\varepsilon< |z|\leq 1 \}}z\tilde \nu({\rm d}z)
=0.
\end{equation}
The next step is to show that
\begin{equation}\label{doublelimit1}
\lim_{\varepsilon\to 0}\limsup_{n\to\infty}\P\left[\sup_{1\leq m\leq n}\left|\sum_{k=1}^{m}\Delta_{n,k}\right|\geq b_n\delta\right]=0,
\end{equation}
where
$$
\Delta_{n,k}=\Delta_{n,k}(s):=\left((\xi_k+\iii\eta_k)\ind_{\{|\xi_k+\iii\eta_k|\leq b_n\varepsilon\}}-\E[(\xi_k+\iii\eta_k)\ind_{\{|\xi_k+\iii\eta_k|\leq b_n\varepsilon\}}]\right)\eee^{\iii k s}.
$$
Note that $\E [\Delta_{n,k}]=0$. Since $\left(\left|\sum_{k=1}^{m}\Delta_{n,k}\right|\right)_{m\in\N}$ is a non-negative submartingale, we can apply Doob's inequality:
$$
\P\left[\sup_{1\leq m\leq n}\left|\sum_{k=1}^{m}\Delta_{n,k}\right|\geq b_n\delta\right]\leq (\delta b_n)^{-2}\E\left|\sum_{k=1}^{n}\Delta_{n,k}\right|^2.
$$
Further,
\begin{align*}
(\delta b_n)^{-2}\E\left|\sum_{k=1}^{n}\Delta_{n,k}\right|^2
&=(\delta b_n)^{-2}\sum_{k=1}^{n}\Var \Delta_{n,k}\\
&\leq (\delta b_n)^{-2}n \E \left[(\xi^2+\eta^2)\ind_{\{\sqrt{\xi^2+\eta^2}\leq b_n\varepsilon\}}\right],
\end{align*}
where $(\xi,\eta)$ is a distributional copy of $(\xi_1,\eta_1)$.
Assumption~\eqref{domain1} implies that $x\mapsto \P[\sqrt{\xi^2+\eta^2}>x]$ is regularly varying. Hence by Karamata's theorem in the form given by formula~(5.22) on p.~579 in \cite{Feller:1971},
$$
(\delta^2 b_n^{-2})n\E\left[(\xi^2+\eta^2)\ind_{\{\sqrt{\xi^2+\eta^2}\leq b_n\varepsilon\}} \right]\sim c\delta^{-2}\varepsilon^2 n\P[\sqrt{\xi^2+\eta^2}>\varepsilon b_n],\quad n\to\infty,
$$
for some $c>0$. Therefore,
\begin{align*}
\limsup_{n\to\infty}\P\left[\sup_{1\leq m\leq n}\left|\sum_{k=1}^{m}\Delta_{n,k}\right|\geq b_n\delta\right]
&\leq c\delta^{-2}\varepsilon^2\lim_{n\to\infty} n\P[\sqrt{\xi^2+\eta^2}>\varepsilon b_n]\\
&=    c\delta^{-2}\varepsilon^2 \int_{|z|>\varepsilon}\tilde \nu({\rm d}z).
\end{align*}
The last expression tends to zero, as $\varepsilon\to 0$, since $\tilde \nu$ is a L\'{e}vy measure, whence~\eqref{doublelimit1}.

Combining~\eqref{rotation_invariant}, \eqref{doublelimit1} and the trivial bound
\begin{align*}
\sup_{0\leq t\leq 1}\left|\E\left[(\xi+\iii\eta)\ind_{\{|\xi+\iii\eta|\leq b_n\varepsilon\}}\right]\sum_{k=1}^{[nt]}\eee^{\iii k s}\right|\leq \frac{2\varepsilon b_n}{|1-\eee^{\iii s}|},
\end{align*}
we see that~\eqref{doublelimit} holds.
\end{proof}

\begin{lemma}\label{main_convergence_lemma}
Fix $s\in\R$ and define a sequence of processes
\begin{equation}\label{eq:Y_n_def}
Y_n(t)
:=
\frac 1 {b_n}\sum_{k=1}^{n}(\xi_k+\iii\eta_k)\exp{\left(\iii k\left(s+\frac{t}{n}\right)\right)}, \quad t\in\C.
\end{equation}
Under the assumptions of Theorem~\ref{theo:local_stable_case} we have
\begin{equation}\label{irrational_convergence_polynomials}
(Y_n(t))_{t\in \C} \toweak \left(U_{\nu}(t)\right)_{t\in\C}
\end{equation}
on $\mathcal{H}$, where the process $U_{\nu}$ is defined in~\eqref{eq:u_nu_def}.
\end{lemma}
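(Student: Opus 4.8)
\emph{The plan} is to exhibit $Y_n$ as a \emph{pathwise}-continuous image of the partial-sum process $L_n$ of Lemma~\ref{processes_converge} and then to invoke the continuous mapping theorem; in this way one avoids any continuity theory for stochastic integrals with respect to Skorokhod-convergent integrators. Let $\Phi$ be the map sending $\ell\in D([0,1],\C)$ to the function
$$
\Phi(\ell)(t):=\ell(1)\eee^{\iii t}-\iii t\int_0^1 \ell(u)\eee^{\iii tu}\,{\rm d}u,\qquad t\in\C.
$$
Since every $\ell\in D([0,1],\C)$ is bounded, the parameter integral above is an entire function of $t$, so $\Phi$ maps $D([0,1],\C)$ into $\mathcal H$. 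By the very definition of $U_{\nu}$ via integration by parts, see~\eqref{eq:u_nu_def} and~\eqref{eq:def_Z_nu_partial}, we have $U_{\nu}=\Phi(L)$. The two things to be proved are therefore: (i) $Y_n=\Phi(L_n)$ for every $n\in\N$; and (ii) $\Phi\colon D([0,1],\C)\to\mathcal H$ is continuous. Granting these, Lemma~\ref{processes_converge} together with the continuous mapping theorem gives $Y_n=\Phi(L_n)\toweak\Phi(L)=U_{\nu}$ on $\mathcal H$, which is the claim.

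For (i), observe that the $k$-th increment of $L_n$ equals $b_n^{-1}(\xi_k+\iii\eta_k)\eee^{\iii ks}$, so $Y_n(t)=\sum_{k=1}^n\bigl(L_n(k/n)-L_n((k-1)/n)\bigr)\eee^{\iii kt/n}$. A summation by parts, using $L_n(0)=0$, turns this into $L_n(1)\eee^{\iii t}-\sum_{k=1}^{n-1}L_n(k/n)\eee^{\iii kt/n}(\eee^{\iii t/n}-1)$. Now $\eee^{\iii kt/n}(\eee^{\iii t/n}-1)=\iii t\int_{k/n}^{(k+1)/n}\eee^{\iii tu}\,{\rm d}u$, while $L_n$ is constant and equal to $L_n(k/n)$ on $[k/n,(k+1)/n)$ and vanishes on $[0,1/n)$; summing over $k$ collapses the sum into $\iii t\int_0^1 L_n(u)\eee^{\iii tu}\,{\rm d}u$. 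Hence $Y_n(t)=L_n(1)\eee^{\iii t}-\iii t\int_0^1 L_n(u)\eee^{\iii tu}\,{\rm d}u=\Phi(L_n)(t)$, and the identity is exact, with no error term.

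For (ii), let $\ell_n\to\ell$ in the $J_1$-topology of $D([0,1],\C)$. By standard properties of $J_1$-convergence (see, e.g., \cite{billingsley_book}), $\sup_n\|\ell_n\|_{[0,1]}<\infty$, $\ell_n(1)\to\ell(1)$ (the Skorokhod time changes fix the right endpoint), and $\ell_n\to\ell$ at every continuity point of $\ell$, hence Lebesgue-almost everywhere on $[0,1]$; by dominated convergence $\int_0^1|\ell_n(u)-\ell(u)|\,{\rm d}u\to0$. Consequently, for every $R>0$, using $|\eee^{\iii tu}|\le\eee^{R}$ for $|t|\le R$ and $u\in[0,1]$,
$$
\sup_{|t|\le R}\bigl|\Phi(\ell_n)(t)-\Phi(\ell)(t)\bigr|\le\eee^{R}\,|\ell_n(1)-\ell(1)|+R\,\eee^{R}\int_0^1|\ell_n(u)-\ell(u)|\,{\rm d}u\ \ton\ 0,
$$
so $\Phi(\ell_n)\to\Phi(\ell)$ uniformly on compact subsets of $\C$, i.e.\ in $\mathcal H$. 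This establishes (ii).

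\emph{The one load-bearing idea} is the summation-by-parts identity of step (i): it is what converts the stochastic-integral-like object $Y_n$ into an honestly pathwise-continuous functional of $L_n$, so that the $J_1$-convergence supplied by Lemma~\ref{processes_converge} can be pushed through by the continuous mapping theorem with no further probabilistic work; without it one would be forced into delicate continuity statements for integrals against $L_n$ in the Skorokhod topology. Step (ii) is then routine, the only mild subtlety being to keep the estimates uniform as $|\Im t|$ grows, which the elementary bound $|\eee^{\iii tu}|\le\eee^{|\Im t|}$ for $u\in[0,1]$ takes care of.
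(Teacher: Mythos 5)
Your proof is correct and follows essentially the same route as the paper: both represent $Y_n$ as the image of the partial-sum process $L_n$ under the integration-by-parts functional $\ell\mapsto\ell(1)\eee^{\iii t}-\iii t\int_0^1\ell(u)\eee^{\iii tu}\,{\rm d}u$ and conclude via Lemma~\ref{processes_converge} and the continuous mapping theorem, with the continuity of the functional established exactly as in Lemma~\ref{continuous_mapping} (the paper cites $J_1\Rightarrow L_1$ convergence where you derive it by dominated convergence). Your explicit summation-by-parts verification of $Y_n=\Phi(L_n)$ supplies a detail the paper leaves implicit, but it is not a different method.
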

\begin{proof}
Define a mapping $\mathcal{F}:D([0,\,1],\C)\to\mathcal{H}$ as follows:
\begin{equation}\label{mapping_definition}
(\mathcal{F}(f))(z):=\int_{[0,\,1]}\eee^{\iii z x}{\rm d}f(x)\overset{\rm{def}}{=}f(1)\eee^{\iii z}-f(0)-\iii z\int_0^1 f(x)\eee^{\iii z x}{\rm d}x,\quad z\in\C.
\end{equation}
Since $f\in D([0,\,1],\C)$ ensures  $\sup_{t\in [0,\,1]}|f(t)|<\infty$, the function $\mathcal{F}(f)$ is analytic on the entire complex plane. Thus, $\mathcal{F}$ is indeed a well-defined mapping from $D([0,\,1],\C)$ to $\mathcal{H}$. By Lemma~\ref{continuous_mapping} in the Appendix the mapping $\mathcal{F}$ is everywhere continuous on $D([0,\,1],\C)$.

In view of the representation $Y_n=\mathcal{F}(L_n)$, with $L_n$ as in~\eqref{eq:L_n_def}, convergence~\eqref{irrational_convergence_polynomials} follows from the  continuous mapping theorem.
\end{proof}

Now we are in a position to prove Theorem~\ref{theo:FCLT_stable_case}.
\begin{proof}[Proof of Theorem~\ref{theo:FCLT_stable_case}]
Recalling the definition of $X_n$, we can write
\begin{align*}
\frac 1 {b_n} X_n\left(s+ \frac tn\right)
&=
\frac 1 {b_n} \sum_{k=1}^{n} \left(\xi_k\sin{\left(k\left(s+\frac{t}{n}\right)\right)}+\eta_k\cos\left(k\left(s+\frac{t}{n}\right)\right)\right)\\
&=
\frac{Y_n(t)-\overline{Y_n(\overline{t})}}{2i}
\end{align*}
with $Y_n$ as in~\eqref{eq:Y_n_def}.
It follows from Lemma~\ref{main_convergence_lemma} that
\begin{align*}
\left(\frac 1 {b_n} X_n\left(s+ \frac t n\right)\right)_{t\in\C}
&\toweak
\left(\frac{U_{\nu}(t)-\overline{U_{\nu}(\overline{t})}}{2i} \right)_{t\in\C}\\
&=
\left(
\int_0^{1} \sin (tu) {\rm d} \Re L(u)  + \int_0^{1} \cos(tu) {\rm d} \Im L(u)
\right)_{t\in\C}
\end{align*}
on $\mathcal {H}$. Since the processes under consideration have their sample paths in $\mathcal H_{\R}$ (which is a closed subset of $\mathcal H$), the convergence holds weakly on $\mathcal H_\R$, too.
\end{proof}

\section{Convergence of zeros}\label{sec:convergence_zeros}
Take some interval $[a,b]\subset \R$ and consider a mapping $N:\mathcal H_{\R}\setminus\{0\}\to \{0,1,\ldots\}$ which assigns to each function $f\in \mathcal H_{\R}$ the number of real zeros of $f$ in the interval  $[a,b]$. Although this will be irrelevant, let us agree that the zeros are counted with multiplicities.

\begin{lemma}\label{lem:1}
Let $A=A[a,b]\subset \mathcal H_{\R}$ be the set consisting of all $f\in \mathcal H_{\R}$ which do not have multiple real zeros in $[a,b]$ and satisfy  $f(a)\neq 0$, $f(b)\neq 0$. Then, the set $A$ is open and the mapping $N$ is locally constant on $A$ (that is, for every $f\in A$ there is an open neighborhood of $f$ in $\mathcal H_{\R}$ on which $N$ is constant).
\end{lemma}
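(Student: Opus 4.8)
The plan is to localise each real zero of $f$ inside a small disk centred on the real axis, apply Rouch\'e's theorem on each such disk, and then use the symmetry $g(\bar z)=\overline{g(z)}$, valid for every $g\in\mathcal H_{\R}$, to guarantee that the perturbed zero is again real and simple. This will simultaneously give openness of $A$ and local constancy of $N$ on $A$.

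Fix $f\in A$. Since $f(a)\neq 0$, $f$ is not identically zero, so its zeros are isolated and only finitely many of them lie in the compact interval $[a,b]$; let $x_1<\dots<x_m$ be the real ones. By the definition of $A$ each $x_j$ is a simple zero and $f(a)f(b)\neq 0$, hence $x_1,\dots,x_m\in(a,b)$ and $N(f)=m$. I would next fix a positive integer $R$ with $\max(|a|,|b|)<R$ and then $\eps>0$ so small that the open disks $D_j:=\{z\in\C:|z-x_j|<\eps\}$ are pairwise disjoint, have closures contained in $\bar{\mathbb D}_R$, satisfy $D_j\cap\R=(x_j-\eps,x_j+\eps)\subset(a,b)$, and contain no zero of $f$ other than $x_j$ (possible because the zeros of $f$ are isolated). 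The set
$$
S:=\Big([a,b]\setminus\bigcup_{j=1}^m D_j\Big)\cup\bigcup_{j=1}^m\partial D_j
$$
is compact and $f$ does not vanish on it, so $\mu:=\min_{z\in S}|f(z)|>0$.

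Then I would show that the neighbourhood $U:=\{g\in\mathcal H_{\R}:\ \|g-f\|_{\bar{\mathbb D}_R}<\mu\}$ of $f$, which is open in $\mathcal H_{\R}$ because $g\mapsto\|g-f\|_{\bar{\mathbb D}_R}$ is continuous, is contained in $A$ and satisfies $N\equiv m$ on it. Take $g\in U$. Since $S\subset\bar{\mathbb D}_R$, $g$ does not vanish on $S$; in particular $g(a)\neq 0$, $g(b)\neq 0$, and every real zero of $g$ in $[a,b]$ must lie in $\bigcup_j(x_j-\eps,x_j+\eps)$. On each circle $\partial D_j$ one has $\sup_{\partial D_j}|g-f|\le\|g-f\|_S<\mu\le\min_{\partial D_j}|f|$, so by Rouch\'e's theorem $g$ has exactly one zero in $D_j$, counted with multiplicity; denote it $w_j$. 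Because $g\in\mathcal H_{\R}$, $\bar w_j$ is also a zero of $g$, and $\bar w_j\in D_j$ since $D_j$ is centred on $\R$; if $w_j$ were non-real, then $w_j$ and $\bar w_j$ would be two distinct zeros of $g$ in $D_j$ with total multiplicity at least $2$, contradicting the Rouch\'e count. Hence $w_j$ is real and has multiplicity exactly $1$. It follows that $g$ has precisely the $m$ distinct simple real zeros $w_1,\dots,w_m$ in $[a,b]$, so $g\in A$ and $N(g)=m=N(f)$. The case $m=0$ is the same argument with $S=[a,b]$.

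The only step that is not a routine instance of the stability of zeros of analytic functions under uniform-on-compact-sets perturbation is the implication: a Rouch\'e count equal to $1$ in a disk centred on $\R$, together with the symmetry $g(\bar z)=\overline{g(z)}$, forces the zero to be real and simple. This is precisely where it is essential that the perturbations stay inside $\mathcal H_{\R}$, since for a general entire function a simple real zero can move off the real axis under an arbitrarily small perturbation.
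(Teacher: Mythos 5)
Your proof is correct and follows essentially the same route as the paper's: isolate each simple real zero in a small disk centred on the real axis, apply the stability of the zero count under uniform perturbation (Rouch\'e in your version, Hurwitz in the paper's), and use the symmetry $g(\bar z)=\overline{g(z)}$ to force the perturbed zero to remain real and simple. The only differences are cosmetic --- you exhibit an explicit $\mu$-neighbourhood instead of arguing along a convergent sequence, and you should choose $\eps$ so that the \emph{closed} disks $\bar D_j$ contain no zero of $f$ other than $x_j$, so that $f$ is genuinely nonvanishing on each $\partial D_j$ before invoking Rouch\'e.
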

\begin{proof}
Consider any sequence $(f_n)_{n\in\N}\subset \mathcal H_{\R}$ which converges to $f\in A$ locally uniformly. We need to show that for sufficiently large $n$ we have $f_n\in A$ and $N(f_n)=N(f)$. Let $R>0$ be so large that $[a,b]$ is contained in the open disk $\mathbb D_R=\{|z|<R\}$. Let $z_1,\ldots, z_d$ be the collection of all zeros of $f$ in $\mathbb D_R$ with corresponding multiplicities $m_1,\ldots,m_d$.
Assume without loss of generality that $f$ has no zeros on the boundary of $\mathbb D_R$ (just increase $R$, otherwise). Let $\eps>0$ be so small that the open $\eps$-disks $z_1+\mathbb D_\eps,\ldots, z_d+\mathbb D_\eps$ do not intersect each other, the boundary of $\mathbb D_R$, and the real axis (except when the zero is itself real).  By Hurwitz's theorem~\cite[p.~152]{conway_book}, for all sufficiently large $n$, the function $f_n$ has exactly $m_k$ zeros (with multiplicities) in the disk $z_k+\mathbb D_\eps$, for all $k=1,\ldots, d$,  and there are no other zeros of $f_n$ in $\mathbb D_R$. If $z_k\in (a,b)$, then $m_k=1$ (in view of $f\in A$) and the corresponding zero of $f_n$ in the disk $z_k+\mathbb D_\eps$ is also real because otherwise $f_n$ would have two different complex conjugated zeros (recall that $f_n(\bar z) = \overline{f_n(z)}$), which is a contradiction. It follows that all real zeros of $f_n$ in $(a,b)$ are simple and their number is $N(f)$. Clearly, $f_n(a)\neq 0$ and $f_n(b)\neq 0$ for sufficiently large $n$.  Hence, $f_n\in A$ and $N(f_n)=N(f)$ for large $n$.
\end{proof}

Recall that $\Zeros_{\R}(f)$ is a locally finite measure on $\R$ counting the real zeros of $f\in \mathcal H_\R\setminus\{0\}$ with multiplicities.
\begin{lemma}\label{lem:1a}
Let $A(\R)$ be the set of all $f\in \mathcal H_{\R}$ which do not have multiple real zeros. Consider a mapping $f\mapsto \Zeros_{\R}(f)$ from $\mathcal H_{\R}\setminus\{0\}$ to the space $\MMM_p(\R)$ of locally finite point measures on $\R$ endowed with the vague topology. Then, this mapping is continuous on $A(\R)$.
\end{lemma}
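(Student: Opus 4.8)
The plan is to reduce the statement to Lemma~\ref{lem:1} by testing vague convergence against compactly supported continuous functions, or equivalently (which is more convenient for point measures) against indicators of finite intervals whose endpoints are not atoms of the limit. Recall that $\mu_n \to \mu$ vaguely in $\MMM_p(\R)$ is equivalent to $\mu_n([a,b]) \to \mu([a,b])$ for every finite interval $[a,b]$ with $\mu(\{a\}) = \mu(\{b\}) = 0$; see \cite{resnick_book}. So let $f \in A(\R)$ and let $(f_n)_{n\in\N} \subset \mathcal H_\R$ converge to $f$ locally uniformly (in particular $f \not\equiv 0$, and for large $n$ also $f_n \not\equiv 0$, since $f_n \to f$ uniformly on some disk where $f$ is not identically zero, so $\Zeros_\R(f_n)$ is eventually well-defined). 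We must show $\Zeros_\R(f_n) \to \Zeros_\R(f)$ vaguely.

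The key step: fix a finite interval $[a,b]$ with $\Zeros_\R(f)(\{a\}) = \Zeros_\R(f)(\{b\}) = 0$, i.e.\ $f(a) \neq 0$ and $f(b) \neq 0$. Since $f \in A(\R)$, $f$ has no multiple real zeros anywhere, in particular none in $[a,b]$; together with $f(a), f(b) \neq 0$ this says precisely that $f \in A[a,b]$ in the notation of Lemma~\ref{lem:1}. By that lemma, $A[a,b]$ is open and the counting map $N = N_{[a,b]}$ is constant on a neighborhood of $f$; hence $N(f_n) = N(f)$ for all large $n$, which is exactly $\Zeros_\R(f_n)([a,b]) \to \Zeros_\R(f)([a,b])$ (the convergence is eventual equality). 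Moreover the same argument applied to the disk $\mathbb D_R$ in the proof of Lemma~\ref{lem:1} shows that, for large $n$, all real zeros of $f_n$ in $[a,b]$ are simple, so they are counted the same way whether or not one uses multiplicities — this is the remark that the statement is multiplicity-independent. Since the set of $[a,b]$ with $f(a) \neq 0 \neq f(b)$ is rich enough (the complement of the zero set of $f$ in $\R$ is open and dense, as $f \not\equiv 0$), we obtain $\Zeros_\R(f_n) \to \Zeros_\R(f)$ vaguely, which is the desired continuity at $f$.

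The main obstacle is essentially bookkeeping rather than a deep difficulty: one has to make sure that the ``eventual'' in ``$N(f_n) = N(f)$ for large $n$'' can depend on $[a,b]$ (it can — vague convergence is checked interval-by-interval, so no uniformity over $[a,b]$ is needed), and that one correctly invokes the portmanteau-type characterization of vague convergence for point measures, for which one needs the boundary points to be non-atoms; this is guaranteed by restricting to intervals with $f(a), f(b) \neq 0$, which are the continuity points of the distribution function of $\Zeros_\R(f)$. No genuinely new estimate beyond Lemma~\ref{lem:1} and Hurwitz's theorem is required.
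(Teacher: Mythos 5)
Your proof is correct and takes essentially the same route as the paper's: both reduce the statement to the Hurwitz-type argument of Lemma~\ref{lem:1} on compact intervals whose endpoints are not zeros of $f$, and then invoke the standard characterization of vague convergence of point measures on such intervals. The only cosmetic difference is that you cite Lemma~\ref{lem:1} as a black box, whereas the paper re-runs its disk argument directly on $[-R,R]$.
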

\begin{proof}
Let $(f_n)_{n\in\N}\subset \mathcal H_{\R}$ be a sequence which converges to $f\in A(\R)$ locally uniformly. Fix $R>0$. Let $z_1,\ldots, z_l$ be the real zeros of $f$ in $[-R,R]$ and assume there are no zeros at $-R$ and $R$. Fix $\eps>0$. Arguing as in the proof of Lemma~\ref{lem:1}, we can show that for sufficiently large $n$, the function $f_n$ has exactly one real zero in any of the disks $z_1+\mathbb D_\eps,\ldots, z_l+\mathbb D_\eps$ and there are no further real zeros of $f_n$ in $[-R,R]$. But this means that $\Zeros_{\R}(f_n)$ converges to $\Zeros_{\R}(f)$ vaguely.
\end{proof}
\subsection{Proofs of Theorems~\ref{theo:local_num_zeros}, \ref{theo:local_num_zeros_point_proc}, \ref{theo:local_arbitrary_cov_matrix} and~\ref{theo:local_stable_case}}
In view of the last two lemmas, Theorems~\ref{theo:FCLT}, \ref{theo:FCLT_arbitrary_cov_matrix} and~\ref{theo:FCLT_stable_case} and the continuous mapping theorem, convergence of zeros in Theorems~\ref{theo:local_num_zeros_point_proc}, \ref{theo:local_arbitrary_cov_matrix} and~\ref{theo:local_stable_case} follows,
if we can show  that
\begin{equation}\label{no_multiple_zeros}
\P[Z\in A(\R)]=\P[G\in A(\R)]=\P[Z_{\nu}\in A(\R)]=1.
\end{equation}
Analogously, Theorem~\ref{theo:local_num_zeros} is a consequence of
\begin{equation}\label{no_multiple_zeros_in_interval}
\P[Z\in A([a,b])]=1,
\end{equation}
for every $a<b$. In order to verify these statements, we need the following result due to E.\ V.\ Bulinskaya~\cite{bulinskaya}. It provides general conditions which ensure that a stochastic process (which need not be Gaussian) does not have multiple zeros, with probability $1$.
\begin{lemma}\label{lem:no_mult_zeros_general}
Let $(Q(t))_{t\in [a,b]}$  be a stochastic process with continuously differentiable sample paths. Assume that the random variables $Q(t)$ are absolutely continuous with densities which are bounded uniformly in $t\in [a,b]$. Then, with probability $1$ there is no $t\in[a,b]$ such that $Q(t)=Q'(t)=0$.
\end{lemma}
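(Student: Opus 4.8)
The plan is to show that $B:=\{\exists\,t\in[a,b]\colon Q(t)=Q'(t)=0\}$ has probability zero by covering $B$, up to an event of arbitrarily small probability, by finitely many ``thin slabs'' $\{|Q(x)|\le\text{small}\}$ anchored at the nodes $x$ of a fine partition of $[a,b]$, and then estimating each slab via the uniform density bound. First I would record that $B$ is measurable: since $t\mapsto Q(t)^2+Q'(t)^2$ has continuous sample paths, $B=\{\min_{t\in[a,b]}(Q(t)^2+Q'(t)^2)=0\}$, and the minimum is a measurable functional (it equals the infimum over $t$ rational or an endpoint). Write $K:=\sup_{t\in[a,b]}\|f_t\|_\infty<\infty$, where $f_t$ denotes a density of $Q(t)$; such a $K$ exists by hypothesis.

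The next ingredient is a control on the oscillation of the derivative. Because $Q'$ has continuous sample paths on the compact interval $[a,b]$, each sample path of $Q'$ is uniformly continuous, so
$$
\omega(\delta):=\sup\bigl\{|Q'(s)-Q'(t)|\colon s,t\in[a,b],\ |s-t|\le\delta\bigr\}
$$
is a finite random variable (measurable, being a supremum over a countable dense set of pairs) with $\omega(\delta)\to0$ almost surely as $\delta\downarrow0$. Then I would fix $\eta>0$ and $N\in\N$, partition $[a,b]$ into $N$ intervals of common length $h=(b-a)/N$ with endpoints $a=x_0<\dots<x_N=b$, and argue as follows: on $B$, pick $t_0$ with $Q(t_0)=Q'(t_0)=0$ lying in some $[x_{j-1},x_j]$; using $Q(t_0)=0$, $Q'(t_0)=0$ and $|x_{j-1}-t_0|\le h$,
$$
|Q(x_{j-1})|=\Bigl|\int_{t_0}^{x_{j-1}}\bigl(Q'(s)-Q'(t_0)\bigr)\,\dd s\Bigr|\le h\,\omega(h),
$$
so either $\omega(h)>\eta$ or $|Q(x_{j-1})|\le h\eta$. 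Hence $B\subset\{\omega(h)>\eta\}\cup\bigcup_{j=1}^{N}\{|Q(x_{j-1})|\le h\eta\}$, and since $\P[|Q(x_{j-1})|\le h\eta]\le 2Kh\eta$ by the density bound,
$$
\P[B]\le\P[\omega(h)>\eta]+2KNh\eta=\P[\omega(h)>\eta]+2K(b-a)\eta .
$$
Letting $N\to\infty$, the first term vanishes by the almost sure (hence in-probability) convergence $\omega(h)\to0$, leaving $\P[B]\le 2K(b-a)\eta$; since $\eta>0$ was arbitrary, $\P[B]=0$.

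The computation is routine once the partition is in place; the one point that genuinely needs care is that the modulus of continuity of $Q'$ is itself random and not uniformly controllable over $\Omega$, so it must be decoupled from the one-dimensional density estimate. This is precisely why the bound is split off using the auxiliary event $\{\omega(h)>\eta\}$ and why the limits must be taken in the order ``$N\to\infty$ first, then $\eta\downarrow0$''. The measurability remarks for $B$ and for $\omega(\delta)$ should be stated explicitly, but cause no real difficulty.
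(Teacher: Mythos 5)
Your proof is correct, and it establishes the lemma exactly as stated: the only density assumption used is the bound on the one-dimensional marginals $Q(t)$, and the condition $Q'(t_0)=0$ is exploited through the modulus of continuity $\omega$ to upgrade the trivial bound $|Q(x_{j-1})|\le h\sup|Q'|$ to the second-order bound $h\,\omega(h)$, which is what makes the union of $N$ slab probabilities $2Kh\eta$ sum to the $N$-independent quantity $2K(b-a)\eta$. The measurability remarks and the order of limits ($N\to\infty$ before $\eta\downarrow 0$) are handled properly.

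The paper itself does not prove this lemma; it cites Bulinskaya and uses the statement as a black box, so your argument supplies the missing proof, and it is essentially the classical one. It is worth noting that the paper's authors, in a companion two-dimensional variant (for an $\R^2$-valued process $Z=(Z_1,Z_2)$), take a slightly different route: they assume a bounded \emph{joint} density of the vector $Z(t)$ together with an a.s.\ Lipschitz bound $M$ on the paths, cover the zero set by balls of radius $n^{-3/4}$ around the grid points (so each ball has probability $O(n^{-3/2})$ and the union bound gives $O(n^{-1/2})$), and absorb the remainder into $\P[M>n^{1/4}]\to 0$. That version buys a statement for general $\R^2$-valued processes without any differentiability link between the coordinates, at the price of requiring a two-dimensional density bound; your version is the one actually needed here, since for the pair $(Q,Q')$ one typically only controls the density of $Q(t)$ itself, and the deterministic relation $Q'=\frac{\dd}{\dd t}Q$ is what substitutes for the second dimension of the density assumption. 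Both arguments share the same skeleton (grid, small-ball estimate at the nodes, a path-regularity event of vanishing probability); the genuine difference is which hypothesis carries the quantitative weight.
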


The parts of~\eqref{no_multiple_zeros} and~\eqref{no_multiple_zeros_in_interval} regarding the Gaussian processes $Z$ and $G$ (in the case $s\notin \pi \Z$) follow immediately from Lemma~\ref{lem:no_mult_zeros_general}   (see also~\cite{ylvisaker} or~\cite{ylvisaker_stat} for further work on the absence of multiple zeros of Gaussian processes). Indeed, the variances of both $Z$ and $G$ are non-zero constants which
implies that there are uniform upper bounds on the densities of $Z(t)$ and
$G(t)$.
 Let us consider $G(t)$ in the case $s\in \pi \Z$.
\begin{lemma}
Let $s\in \pi \Z$. Then, with probability $1$ there is no $t\in\R$ such that $G(t)= G'(t)=0$.
\end{lemma}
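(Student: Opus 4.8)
The plan is to apply Bulinskaya's criterion, Lemma~\ref{lem:no_mult_zeros_general}, on every closed interval that stays away from the origin, and to dispose of the single point $t=0$ separately, since that is the only place where the process $G$ can genuinely degenerate. First I would observe that $(G(t))_{t\in\C}$ is almost surely an entire function, so its restriction to $\R$ has real-analytic, in particular continuously differentiable, sample paths, and hence the smoothness hypothesis of Lemma~\ref{lem:no_mult_zeros_general} holds automatically. Next I would compute the variance function of $G$ on $\R$; a direct computation with the stochastic integrals defining $G$ gives
$$
v(t):=\Var G(t)=\int_0^1\bigl(\sigma_1^2\sin^2(tu)+2\rho\sin(tu)\cos(tu)+\sigma_2^2\cos^2(tu)\bigr)\,\dd u=:\int_0^1 q(tu)\,\dd u,
$$
where $q(x)=\sigma_1^2\sin^2 x+2\rho\sin x\cos x+\sigma_2^2\cos^2 x\ge 0$ because $\rho^2\le\sigma_1^2\sigma_2^2$ by Cauchy--Schwarz. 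I would also record that $v$ is continuous on $\R$ and that $v(t)\to(\sigma_1^2+\sigma_2^2)/2>0$ as $|t|\to\infty$, since the oscillatory terms average out.

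The key point will be that $v(t)>0$ for every $t\ne0$. To see this, note that $q\ge0$ and $q\not\equiv0$, because $q(x)+q(x+\pi/2)=\sigma_1^2+\sigma_2^2>0$; as $q$ is real-analytic, its zero set is discrete, hence for a fixed $t\ne0$ we have $q(tu)>0$ for Lebesgue-a.e.\ $u\in[0,1]$ and therefore $v(t)=\int_0^1 q(tu)\,\dd u>0$. Combining this with the continuity of $v$ and its strictly positive limit at $\pm\infty$, for each $j\in\N$ the infimum $c_j:=\inf_{1/j\le|t|\le j}v(t)$ is strictly positive, so every $G(t)$ with $1/j\le|t|\le j$ is a Gaussian random variable whose density is bounded by $(2\pi c_j)^{-1/2}$. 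Lemma~\ref{lem:no_mult_zeros_general}, applied on $[1/j,j]$ and on $[-j,-1/j]$, then shows that almost surely there is no $t$ with $1/j\le|t|\le j$ and $G(t)=G'(t)=0$, and taking the union over $j\in\N$ rules out any such $t$ in $\R\setminus\{0\}$.

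The hard part will be the endpoint $t=0$: when $\sigma_2=0$ one has $v(0)=0$ and $G(0)\equiv0$, so Bulinskaya's lemma is genuinely inapplicable at the origin. There I would argue by hand: integration by parts gives $G(0)=\Im W(1)$ and $G'(0)=\int_0^1 u\,\dd\Re W(u)$, a jointly Gaussian pair with $\Var G(0)=\sigma_2^2$ and $\Var G'(0)=\sigma_1^2/3$. Since $\sigma_1^2+\sigma_2^2>0$, at least one of these two variances is strictly positive, hence at least one of $G(0)$, $G'(0)$ has a non-atomic distribution, and therefore $\P[G(0)=G'(0)=0]=0$. Putting this together with the previous paragraph yields the assertion.
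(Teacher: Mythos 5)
Your proof is correct, and its overall architecture is the same as the paper's: verify the hypotheses of Bulinskaya's criterion (Lemma~\ref{lem:no_mult_zeros_general}) on compact intervals where $\Var G$ is bounded away from zero, and treat the possibly degenerate point $t=0$ by hand via the non-degeneracy of $G(0)=\Im W(1)$ or $G'(0)=\int_0^1 u\,\dd\Re W(u)$. The one place where you genuinely diverge is the proof that $\Var G(t)>0$ for $t\neq 0$. The paper works from the closed-form expression $\Var G(t)=\tfrac{\sigma_1^2+\sigma_2^2}{2}-\tfrac{\sigma_1^2-\sigma_2^2}{2}\sinc(2t)+\rho\,\tfrac{1-\cos(2t)}{2t}$ and shows that $t\mapsto t\Var G(t)$ is non-decreasing (its derivative is $(\sigma_1^2+\sigma_2^2)-(\sigma_1^2-\sigma_2^2)\cos(2t)+2\rho\sin(2t)\ge 0$ by Cauchy--Schwarz), which forces a case split on $\sigma_2>0$ versus $\sigma_2=0$ (in the latter case $\rho=0$ and the variance simplifies). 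You instead write $\Var G(t)=\int_0^1 q(tu)\,\dd u$ with $q(x)=\sigma_1^2\sin^2x+2\rho\sin x\cos x+\sigma_2^2\cos^2x$, note that $q\ge 0$ is the covariance quadratic form evaluated at $(\sin x,\cos x)$, that $q\not\equiv 0$ since $q(x)+q(x+\pi/2)=\sigma_1^2+\sigma_2^2>0$, and that a non-trivial real-analytic function has a discrete zero set, so the integral is strictly positive for every $t\neq 0$. This is a clean, case-free argument that buys you uniform treatment of all parameter configurations at the cost of always having to handle $t=0$ separately (the paper absorbs $t=0$ into the Bulinskaya step when $\sigma_2>0$); your endpoint argument there is also slightly more symmetric than the paper's, since you only need one of $\Var G(0)=\sigma_2^2$ and $\Var G'(0)=\sigma_1^2/3$ to be positive, which is guaranteed by $\sigma_1^2+\sigma_2^2>0$. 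Both routes are complete and correct.
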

\begin{proof}
We consider only $t\geq 0$ because the case $t < 0$ is similar.
For all $t>0$ we have
$$
\Var G(t)= \frac {\sigma_1^2 + \sigma_2^2}2 - \frac {\sigma_1^2 - \sigma_2^2}2 \sinc (2t)+\rho \frac {1- \cos(2t)}{2t}.
$$
The function $t\mapsto t\Var G(t)$ is non-decreasing since
$$
\frac {{\rm d}}{{\rm d} t}(2t\Var G(t)) = (\sigma_1^2 + \sigma_2^2) - (\sigma_1^2 - \sigma_2^2) \cos (2t) + 2\rho \sin (2t) \geq 0,
$$
where in the latter inequality we used that $|\rho| \leq \sigma_1 \sigma_2$ (Cauchy--Schwarz inequality) and the maximum of the function $a \cos (2t) + b \sin (2t)$ is $\sqrt{a^2+b^2}$.

\vspace*{2mm}
\noindent {\sc Case $\Var G(0)=\sigma_2^2>0$}.  We can find $\eps>0$ such that $\Var G(t) > \frac12 \sigma_2^2>0$ for all $0\leq t\leq\eps$. For $t\geq \eps$, we obtain $\Var G(t) \geq  \eps \sigma_2^2 /(2t)$ and consequently $\Var G(t)$ is bounded below on compact sets, thus justifying the use of Lemma~\ref{lem:no_mult_zeros_general}.

\vspace*{2mm}
\noindent {\sc Case $\Var G(0)=\sigma_2^2=0$}. Then $\rho=0$ and $\Var G(t) = \frac 12 \sigma_1^2 (1-\sinc (2t))$ is still uniformly bounded away from zero on $[a,b]$ for all
$0<a<b<\infty$.  By Lemma~\ref{lem:no_mult_zeros_general} there are no multiple zeros of $G$ on $(0,\infty)$, with probability $1$. To see that $0$ is a.s.\ not a multiple zero, note that although $G(0)=0$, we have
$G'(0) = \int_0^1 u {\rm d}\Re W(u)$ which is a Gaussian variable with strictly positive variance (because $\Var \Re W(1) = \sigma_1^2>0$) and hence, non-vanishing a.s.
\end{proof}

Let us check that $\P[Z_{\nu}\in A(\R)]=1$ using Lemma~\ref{lem:no_mult_zeros_general}.

\begin{lemma}
With probability $1$, there is no $t\in\R$ such that $Z_{\nu}(t)= Z_{\nu}'(t)=0$.
\end{lemma}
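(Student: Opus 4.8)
The plan is to verify the hypotheses of Bulinskaya's criterion (Lemma~\ref{lem:no_mult_zeros_general}) for $Q=Z_\nu$ on an arbitrary compact interval $[a,b]\subset\R$ with $0\notin[a,b]$, and then to exclude the single exceptional point $t=0$ by a separate argument. The sample-path hypothesis of Lemma~\ref{lem:no_mult_zeros_general} comes for free: since $Z_\nu(t)=(U_\nu(t)-\overline{U_\nu(\overline t)})/(2\iii)$ with $U_\nu$ a random analytic function on $\C$, the process $(Z_\nu(t))_{t\in\R}$ has a.s.\ real-analytic, in particular continuously differentiable, sample paths. So the real task is to show that, for $t$ in a compact interval bounded away from $0$, the laws of $Z_\nu(t)$ are absolutely continuous with densities bounded uniformly in $t$.

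First I would identify the one-dimensional marginal law. For real $t$ we have $Z_\nu(t)=\int_0^1\sin(tu)\,\dd\Re L(u)+\int_0^1\cos(tu)\,\dd\Im L(u)=\Im\int_0^1\eee^{\iii tu}\,\dd L(u)$, a stochastic integral of a bounded $C^1$ deterministic integrand against the $\alpha$-stable L\'evy process $L$ (understood via the integration by parts of Remark~\ref{rem:partial_integration}); by the theory of such integrals, see~\cite{samorodnitsky_taqqu_book,iksanov_incr_response}, $Z_\nu(t)$ is $\alpha$-stable. Using the polar decomposition $\tilde\nu(\dd r\,\dd\omega)=r^{-1-\alpha}\,\dd r\,\Gamma(\dd\omega)$ of its L\'evy measure, with $\Gamma$ a finite nonzero measure on the unit circle $\S^1\subset\C$, a direct L\'evy--Khintchine computation gives
$$
|\E\,\eee^{\iii v Z_\nu(t)}|=\exp(-|v|^{\alpha}\sigma_t^{\alpha}),\qquad \sigma_t^{\alpha}:=c_\alpha\int_0^1\!\!\int_{\S^1}|\Im(\eee^{\iii tu}\omega)|^{\alpha}\,\Gamma(\dd\omega)\,\dd u,\qquad v\in\R,
$$
where $c_\alpha:=\int_0^\infty(1-\cos y)\,y^{-1-\alpha}\,\dd y\in(0,\infty)$. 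Two properties of $\sigma_t$ are then needed: the map $t\mapsto\sigma_t^\alpha$ is continuous (dominated convergence, the integrand being at most $1$ and $\Gamma$ finite), and $\sigma_t>0$ for every $t\neq0$. For the latter, fix $\omega=\eee^{\iii\phi}\in\S^1$; when $t\neq0$ the function $u\mapsto\Im(\eee^{\iii tu}\omega)=\sin(tu+\phi)$ has only finitely many zeros in $[0,1]$, hence $\int_0^1|\sin(tu+\phi)|^\alpha\,\dd u>0$, and integrating against the nonzero measure $\Gamma$ gives $\sigma_t^\alpha>0$.

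From $|\E\,\eee^{\iii vZ_\nu(t)}|=\eee^{-|v|^\alpha\sigma_t^\alpha}$ and Fourier inversion, for $t\neq0$ the variable $Z_\nu(t)$ has a bounded continuous density $p_t$ with $\|p_t\|_\infty\le\frac1{2\pi}\int_\R\eee^{-|v|^\alpha\sigma_t^\alpha}\,\dd v=C_\alpha/\sigma_t$, where $C_\alpha:=\frac1{2\pi}\int_\R\eee^{-|v|^\alpha}\,\dd v<\infty$. Consequently, on any $[a,b]$ with $0\notin[a,b]$ one has $\inf_{t\in[a,b]}\sigma_t>0$ by continuity and positivity, so $\sup_{t\in[a,b]}\|p_t\|_\infty<\infty$, and Lemma~\ref{lem:no_mult_zeros_general} gives that a.s.\ there is no $t\in[a,b]$ with $Z_\nu(t)=Z_\nu'(t)=0$; taking the union over a countable family of such intervals exhausting $\R\setminus\{0\}$ shows that a.s.\ $Z_\nu$ has no multiple zero away from $0$. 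It remains to treat $t=0$, where one computes $Z_\nu'(0)=\int_0^1 u\,\dd\Re L(u)$. If $\sigma_0>0$, then $Z_\nu(0)$ is absolutely continuous and $\P[Z_\nu(0)=0]=0$. If $\sigma_0^\alpha=c_\alpha\int_{\S^1}|\Im\omega|^\alpha\,\Gamma(\dd\omega)=0$, then $\Gamma$ is carried by $\{\pm1\}\subset\S^1$, i.e.\ $\tilde\nu$ is supported on $\R\setminus\{0\}$ and $L$ is $\R$-valued, so $Z_\nu(0)=\Im L(1)=0$ a.s.; but then $Z_\nu'(0)=\int_0^1 u\,\dd\Re L(u)$ is an $\alpha$-stable integral against the nontrivial real L\'evy process $\Re L$, with characteristic function of modulus $\exp(-c|v|^\alpha)$, $c=c_\alpha\Gamma(\S^1)(\alpha+1)^{-1}>0$, hence is absolutely continuous, and $\P[Z_\nu'(0)=0]=0$. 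In either case $\P[Z_\nu(0)=Z_\nu'(0)=0]=0$, which together with the previous step completes the proof.

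The delicate step is the middle one: converting the integration-by-parts definition of $Z_\nu(t)$ into a usable statement about its distribution (the integral need not exist pathwise when $\alpha\ge1$), computing the scale $\sigma_t$ explicitly enough to read off both its continuity and its positivity for every $t\neq0$ no matter how concentrated the rotation-averaged L\'evy measure $\tilde\nu$ is, and thereby extracting the \emph{uniform}-in-$t$ density bound demanded by Bulinskaya's lemma; the single bad point $t=0$ then has to be excluded by the first-derivative argument above.
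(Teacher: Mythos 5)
Your proposal is correct and follows essentially the same route as the paper: both verify Bulinskaya's criterion on compact intervals bounded away from $0$ by computing the real part of the L\'evy exponent of $Z_\nu(t)$ via the spectral measure $\Gamma$, deducing a uniform lower bound on the stable scale (your continuity-plus-pointwise-positivity of $\sigma_t^\alpha$ is the same fact the paper extracts from the joint continuity of $(\phi,t)\mapsto t^{-1}\int_\phi^{t+\phi}|\sin v|^\alpha\,\dd v$), and concluding uniformly bounded densities by Fourier inversion. The treatment of $t=0$, with the case split on whether $\Gamma$ is carried by the real axis and the fallback to the non-degeneracy of $Z_\nu'(0)=\int_0^1 u\,\dd\Re L(u)$, is also identical to the paper's.
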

\begin{proof}
Recall that $Z_{\nu}$ is a random analytic function. We intend to show that $Z_{\nu}(t)$ have densities which are bounded uniformly in $t\in\R$, $\varepsilon<|t|<\varepsilon^{-1}$ for fixed $\varepsilon>0$. By Lemma~\ref{lem:no_mult_zeros_general} this implies that the process $Z_{\nu}$ almost surely does not have multiple zeros in any interval bounded away from zero. Fix $\varepsilon>0$. It is enough to show that
\begin{equation}\label{eq:char_in_L1}
\int_{\R} \left| \E \eee^{\iii a Z_{\nu}(t)}\right|  {\rm d} a \leq  C,
\end{equation}
where $C$ does not depend on $\varepsilon<|t|<\varepsilon^{-1}$. This means that the characteristic function of the random variable $Z_{\nu}(t)$ has bounded $L^1$-norm which, by Fourier inversion, implies that this random variable has Lebesgue density, say $p_t$, and
$$
p_t(x) = \frac 1 {2\pi} \left|\int_{\R} \eee^{-\iii  ax} \E \eee^{\iii a Z_{\nu}(t)} {\rm d} a\right| \leq \frac C {2\pi}, \quad x\in\R, \quad \eps < |t| < \eps^{-1}.
$$
We prove~\eqref{eq:char_in_L1}. Recall that
$$
aZ_{\nu}(t) = \int_0^{1} a \sin (tu) {\rm d} \Re L(u)  + \int_0^{1} a\cos(tu)  {\rm d} \Im L(u).
$$
By a formula for the characteristic function of such stochastic integral (see, for example, formula (6) in ~\cite{iksanov_incr_response}), we have
\begin{equation}\label{eq:char_func_z_nu}
\log \E \eee^{\iii a Z_{\nu}(t)}
=
\int_0^1 \psi (a \sin (tu), a\cos(tu)) {\rm d} u,
\end{equation}
where 
$$
\psi(x,y) = \log \E \eee^{\iii (x \Re L(1) + y \Im L(1))}, \quad x,y\in\R.
$$
The random vector $(\Re L(1), \Im L(1))$ is $\alpha$-stable. Denote its spectral measure by $\Gamma$ (which is a finite measure on the unit circle $\mathbb T =\{z\in \C\colon |z|=1\}$ that can be easily expressed in terms of $\tilde \nu$). We have, see Theorem 2.3.1 in \cite{samorodnitsky_taqqu_book},
\begin{align*}
\Re \psi(a \sin (tu), a\cos(tu))
&= - |a|^{\alpha}\int_{[0,2\pi)} |\Im \eee^{\iii (tu+\phi)}|^{\alpha} \Gamma({\rm d}\phi)\\
&=
- |a|^{\alpha}\int_{[0,2\pi)} |\sin (tu+\phi)|^{\alpha} \Gamma({\rm d}\phi).
\end{align*}
Putting this into~\eqref{eq:char_func_z_nu} we obtain
\begin{align*}
\Re \log \E \eee^{\iii a Z_{\nu}(t)}
&=
 - |a|^{\alpha}\int_{[0,2\pi)}\int_0^1  |\sin (tu+\phi)|^{\alpha} {\rm d}u\Gamma({\rm d}\phi)\\
&= - |a|^{\alpha}\int_{[0,2\pi)}t^{-1}\int_{\phi}^{t+\phi}  |\sin v|^{\alpha} {\rm d}v\Gamma({\rm d}\phi).
\end{align*}
The function $(\phi,t)\mapsto t^{-1}\int_{\phi}^{t+\phi}|\sin v|^{\alpha}{\rm d}v$ is continuous and strictly positive on the compact set $[0, 2\pi]\times [\varepsilon,\varepsilon^{-1}]$, hence attains its minimal value, say $\delta>0$. Therefore,
$$
\Re \log \E \eee^{\iii a Z_{\nu}(t)} \leq -\delta \Gamma([0,2\pi))|a|^{\alpha},\quad a\in\R,
$$
yielding~\eqref{eq:char_in_L1} and proving that there are no multiple zeros in $\R\setminus\{0\}$.

 It remains to show that $t=0$ is not a multiple zero almost surely. Note that if the spectral measure $\Gamma$ is supported by $\{0,\pi\}$, then
$\Im L(u)=0$ and hence $Z_{\nu}(0)=0$ almost surely. Otherwise, $Z_{\nu}(0)=\Im L(1)$ is non-degenerate stable random variable, hence $\P[Z_{\nu}(0)=0]=0$. So let us assume that $\Gamma$ is concentrated on $\{0,\pi\}$. We have
$$
Z_{\nu}^{\prime}(0)=\int_0^1 u{\rm d}\Re L(u)
$$
implying that $Z_{\nu}^{\prime}(0)$ has a non-degenerate stable law, hence $\P[Z^{\prime}_{\nu}(0)=0]=0$. This completes the proof of the lemma.
\end{proof}

\section{Appendix}\label{sec:appendix}


The first statement of the next lemma is closely related to the classical Weyl equidistribution theorem, see, for example, \cite{Kuipers+Niederreiter:1974}, which states that for every irrational $\alpha$,
$$
\lim_{n\to\infty} \frac{1}{n} \#\{1\leq k\leq n: (k\alpha)\, {\rm mod}\, 1 \leq y\} = y, \quad y\in [0,1].
$$
It is used in the proof of Lemma~\ref{point_processes_convergence}.
\begin{lemma}\label{weyl_uniform_lemma}
Let $f:[0,\infty)\times (\overline{\C}\setminus\{0\})\to\R^{+}$ be a continuous function with compact support.
\begin{itemize}
\item[(i)]
If $\alpha\in\R\setminus \pi\Q$, then
\begin{equation}\label{weyl_uniform_convergence_irrational}
\lim_{n\to\infty}\sup_{z\in \overline{\C}\setminus\{0\}}\left|\frac{1}{n}\sum_{k\geq 1}f(k/n,\eee^{\iii k\alpha}z)-  \int_0^{\infty}\int_{0}^{1}f(x,\eee^{2\pi \iii y}z){\rm d}y{\rm d}x\right|=0.
\end{equation}
\item[(ii)]
If $\alpha=2\pi p/q$ for $p\in\Z$ and $q\in\N$ coprime, then
\begin{equation}\label{weyl_uniform_convergence_rational}
\lim_{n\to\infty}\sup_{z\in \overline{\C}\setminus\{0\}}\left|\frac{1}{n}\sum_{k\geq 1}f(k/n,\eee^{\iii k\alpha}z)-\frac{1}{q}\sum_{k=1}^{q}\int_0^{\infty}f(x,\eee^{2\pi \iii k/q}z){\rm d}x\right|=0.
\end{equation}
\end{itemize}
\end{lemma}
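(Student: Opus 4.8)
The plan is to reduce both statements to a uniform-in-$z$ combination of a Riemann-sum convergence in the "time" variable $x$ and an equidistribution statement (or, in case (ii), exact periodicity) in the "angular" variable, exploiting that $f$ has compact support. I would begin with the reductions common to both parts. Since $f$ has compact support there are $a,r>0$ with $f(x,w)=0$ whenever $x>a$ or $|w|<r$ (with $|\infty|:=\infty$); hence every sum over $k$ in \eqref{weyl_uniform_convergence_irrational}--\eqref{weyl_uniform_convergence_rational} ranges only over $1\le k\le na$, the $x$-integrals reduce to $\int_0^a$, and both quantities inside $|\cdot|$ vanish identically when $|z|<r$. It therefore suffices to take the supremum over $z$ in the compact set $K:=\{w\in\overline{\C}\setminus\{0\}:|w|\ge r\}$ (which contains $\infty$), and the whole problem becomes a statement about the continuous function $\tilde f(x,\phi,z):=f(x,\eee^{\iii\phi}z)$ on the compact set $[0,a]\times\T\times K$; here $(\phi,z)\mapsto\eee^{\iii\phi}z$ is continuous on $\T\times K$ with $\eee^{\iii\phi}\infty=\infty$, and $\int_0^1 f(x,\eee^{2\pi\iii y}z)\,\dd y=\frac1{2\pi}\int_0^{2\pi}\tilde f(x,\phi,z)\,\dd\phi$.

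For part (i) I would fix $\eps>0$ and apply the Stone--Weierstrass theorem on $[0,a]\times\T\times K$ to choose finitely many polynomials $u_1,\dots,u_L$ on $[0,a]$, integers $m_1,\dots,m_L$, and $v_1,\dots,v_L\in C(K)$ with $\sup_{[0,a]\times\T\times K}|\tilde f-g|<\eps$, where $g(x,\phi,z):=\sum_{i=1}^L u_i(x)\eee^{\iii m_i\phi}v_i(z)$; this is legitimate since the algebra generated by polynomials in $x$, the characters $\eee^{\iii m\phi}$, and $C(K)$ is self-adjoint, contains constants and separates points. Then
\[
\frac1n\sum_{k\ge1}f(k/n,\eee^{\iii k\alpha}z)=\frac1n\sum_{k\le na}\tilde f(k/n,k\alpha,z)=\sum_{i=1}^L v_i(z)\,\frac1n\sum_{k\le na}u_i(k/n)\eee^{\iii m_i k\alpha}+r_n(z),\quad \sup_{z\in K}|r_n(z)|\le\eps a.
\]
For $m_i=0$ the inner sum is a Riemann sum tending to $\int_0^a u_i(x)\,\dd x$. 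For $m_i\neq0$, the hypothesis $\alpha\notin\pi\Q$ gives $\eee^{\iii m_i\alpha}\neq1$, so the partial sums of $(\eee^{\iii m_i k\alpha})_k$ are bounded by $2/|1-\eee^{\iii m_i\alpha}|$; Abel summation, using the bounded variation of the polynomial $u_i$ on $[0,a]$, yields $\sum_{k\le na}u_i(k/n)\eee^{\iii m_i k\alpha}=O(1)$, hence $\frac1n\sum_{k\le na}u_i(k/n)\eee^{\iii m_i k\alpha}\to0$. Since the $v_i$ are uniformly bounded on $K$ and these limits are $z$-free, $\frac1n\sum_{k\le na}\tilde f(k/n,k\alpha,z)\to\sum_{i:\,m_i=0}v_i(z)\int_0^a u_i(x)\,\dd x=\int_0^a\frac1{2\pi}\int_0^{2\pi}g(x,\phi,z)\,\dd\phi\,\dd x$ uniformly in $z\in K$, up to the $O(\eps)$ from $r_n$; and this last expression is within $O(\eps)$ of $\int_0^a\int_0^1 f(x,\eee^{2\pi\iii y}z)\,\dd y\,\dd x$. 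Letting $\eps\downarrow0$ proves \eqref{weyl_uniform_convergence_irrational}.

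For part (ii) I would use that $\alpha=2\pi p/q$ makes $\eee^{\iii k\alpha}=\eee^{2\pi\iii kp/q}$ depend only on $k\bmod q$. Writing $k=qm+\ell$ with $\ell\in\{1,\dots,q\}$,
\[
\frac1n\sum_{k\ge1}f(k/n,\eee^{\iii k\alpha}z)=\sum_{\ell=1}^q\frac1q\cdot\frac qn\sum_{m\ge0:\;qm+\ell\le na}f\!\left(\tfrac{qm+\ell}n,\eee^{2\pi\iii\ell p/q}z\right),
\]
and for each $\ell$ the inner quantity is $\frac1q$ times a Riemann sum with mesh $q/n\to0$, tending to $\frac1q\int_0^a f(x,\eee^{2\pi\iii\ell p/q}z)\,\dd x$ uniformly in $z\in K$ by uniform continuity of $f$ on $[0,a]\times K$. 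Summing over $\ell$ and using that $p,q$ are coprime (so $\ell\mapsto\ell p\bmod q$ permutes residues, whence $\{\eee^{2\pi\iii\ell p/q}:1\le\ell\le q\}=\{\eee^{2\pi\iii j/q}:1\le j\le q\}$) gives $\frac1q\sum_{j=1}^q\int_0^\infty f(x,\eee^{2\pi\iii j/q}z)\,\dd x$, which is \eqref{weyl_uniform_convergence_rational}.

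The only genuinely delicate point is the uniformity over the noncompact index set $\overline{\C}\setminus\{0\}$: it is what forces the reduction to the compact $K$ and, in the approximation step, the separation of the $z$-dependence (sitting entirely in the uniformly bounded multipliers $v_i$) from the oscillatory estimate $\frac1n\sum_{k\le na}u_i(k/n)\eee^{\iii m_i k\alpha}\to0$, which is $z$-free. Everything else is a routine Riemann-sum/Abel-summation computation, and no quantitative (Erd\H{o}s--Tur\'an type) form of Weyl's theorem is needed; the inclusion of $\infty$ in $K$ costs only the remark that $(\phi,z)\mapsto\eee^{\iii\phi}z$ is continuous there.
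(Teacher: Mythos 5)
Your proof is correct, but it takes a genuinely different route from the paper's. The paper keeps $f$ intact and works with the family $f_z(x,y)=f(x,ze^{\iii y})$ in $C([0,a]\times[0,2\pi])$: it proves uniform boundedness and equicontinuity of this family (handling the point $\infty$ via the uniform limit $f(x,\infty)$), invokes Arzel\`a--Ascoli, establishes weak convergence of the empirical measures $\frac{1}{[na]}\sum_k\delta_{(k/n,(k\alpha)\bmod 2\pi)}$ to the uniform measure by citing Weyl's equidistribution theorem, and then obtains the uniformity in $z$ via the Skorokhod representation theorem together with equicontinuity and dominated convergence. You instead separate the variables outright: Stone--Weierstrass on the compact $[0,a]\times\T\times K$ reduces everything to finitely many products $u_i(x)e^{\iii m_i\phi}v_i(z)$, after which the $z$-dependence sits entirely in the bounded multipliers $v_i$ and the dynamics reduce to the $z$-free exponential sums $\frac1n\sum_{k\le na}u_i(k/n)e^{\iii m_ik\alpha}$, killed by the geometric-sum bound plus Abel summation when $m_i\neq0$. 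In effect you re-prove the relevant instance of Weyl's criterion rather than quoting the equidistribution theorem, and the uniformity over $z$ comes for free from the separation of variables rather than from a compactness/Skorokhod argument; the paper's route is shorter given the cited black boxes, while yours is more elementary and self-contained. Your part (ii) (splitting into residue classes mod $q$ and using coprimality to permute the roots of unity) matches the spirit of the paper's, which treats the rational case as an analogous but easier empirical-measure statement. One point worth making explicit if you write this up: the Stone--Weierstrass approximation is complex-valued (because of the characters $e^{\iii m\phi}$), which is harmless since only the uniform bound $|\tilde f-g|<\eps$ is used, but the self-adjointness of the algebra should be noted, as you do.
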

\begin{proof}
Fix $a,r>0$ such that $f(x,z)=0$ if $x>a$ or $|z|<r$. Consider a family $(f_z)\subset C([0,\,a]\times [0,\,2\pi])$, where $f_z(x,y):=f(x,z\eee^{\iii y})$, $y\in[0,2\pi]$, $x\in [0,\,a]$, indexed by the complex  variable $z$ such that $|z|\geq r$. Trivially, this family is uniformly bounded. Let us check that $(f_z)$ is also equicontinuous and therefore, by the Arzel\'{a}--Ascoli theorem, precompact in $C([0,\,a]\times [0,\,2\pi])$.

Fix $\varepsilon>0$. Since $f$ is uniformly continuous on $[0,\infty) \times (\overline{\C}\setminus\{0\})$ we can find $\delta>0$ such that for every $z_1,z_2\in \overline{\C}\setminus\{0\}$, $|z_1-z_2|<\delta$ and every $x_1,x_2\in [0,\infty)$, $|x_1-x_2|<\delta$, we have $|f(x_1,z_1)-f(x_2,z_2)|<\varepsilon/3$. Further, by the uniform continuity of $f$, there exists a finite limit
$$f(x,\infty)=\lim_{|z|\to+\infty}f(x,z)$$
and moreover the convergence is uniform for $x\in [0,\,a]$. Pick $R=R(\varepsilon)>0$ such that $|f(x,z_1)-f(x,z_2)|<\varepsilon/3$ for $|z_1|>R$, $|z_2|>R$ and all $x\in[0,\,\infty)$. We have, for all $y_1,y_2\in [0,\,2\pi]$ and $x_1,x_2\in [0,\,a]$,
\begin{align*}
\sup_{|z|\geq r}|f_z(x_1,y_1)-f_z(x_2,y_2)|
&= \sup_{|z|\geq r}|f(x_1,z\eee^{\iii y_1})-f(x_2,z\eee^{\iii y_2})|\\
&\leq \sup_{r\leq |z|\leq R}|f(x_1,z\eee^{\iii y_1})-f(x_1,z\eee^{\iii y_2})|\\
&+ \sup_{|z| > R}|f(x_1,z\eee^{\iii y_1})-f(x_1,z\eee^{\iii y_2})|\\
&+\sup_{|z|\geq r}|f(x_1,z\eee^{\iii y_2})-f(x_2,z\eee^{\iii y_2})|.
\end{align*}
The first and the third summands are $<\varepsilon/3$ whenever $|y_1-y_2|<\delta/R$ and $|x_1-x_2|<\delta$, respectively. The second is $<\varepsilon/3$ by the choice of $R$. Hence,
$$
\sup_{|z|\geq r}|f_z(x_1,y_1)-f_z(x_2,y_2)|<\varepsilon
$$
if $|y_1-y_2|<\delta/R$ and $|x_1-x_2|<\delta$, yielding the equicontinuity.

To check~\eqref{weyl_uniform_convergence_irrational}, consider the measures on $[0,a]\times [0,\,2\pi]$ defined by
$$
\mu^{\prime}_n:=\frac{1}{[na]}\sum_{k\geq 1}\delta_{(k/n,(k\alpha){\rm mod }(2\pi))}
$$
and note that, for every $x\in(0,\,a]$ and $y\in [0,\,2\pi]$,
\begin{align}
\mu^{\prime}_n([0,\,x]\times [0,\,y])
&=\frac{\#\{k\in\N:k/n\leq x,(k\alpha){\rm mod}(2\pi) \leq y\}}{[na]}\label{weyl_joint1}\\
&\sim \frac{x}{a}\frac{\#\{k\leq nx:(k\alpha){\rm mod}(2\pi) \leq y\}}{nx}\ton \frac{xy}{2\pi a}\notag, \notag
\end{align}
where the last passage follows from Weyl's equidistribution theorem. Therefore, we have weak convergence of probability measures
\begin{equation}\label{weyl_joint2}
\mu_n^{\prime}\ton \mu^{\prime}:=(2\pi a)^{-1}\mathbb{LEB}_{[0,\,a]\times [0,\,2\pi]},
\end{equation}
since the distribution functions of $\mu^{\prime}_n$ converge pointwise to the distribution function of $\mu^{\prime}$.

By the Skorokhod representation theorem there exists a probability space $(\Omega,\mathcal{A},\P)$ and random vectors $X_n$ and $X$ on this space such that $X_n$ has distribution $\mu^{\prime}_n$, $X$ has distribution $\mu^{\prime}$, and $X_n\to X$, as $n\to\infty$, almost surely. With this notation we can recast~\eqref{weyl_uniform_convergence_irrational} as follows:
\begin{equation}\label{weyl_uniform_convergence2}
\lim_{n\to\infty}a\sup_{|z|\geq r}\left|\E_{\P}f_z(X_n)-\E_{\P}f_z(X)\right|=0.
\end{equation}
By the precompactness (or just equicontinuity) of the family $(f_z)$:
$$
\sup_{|z|\geq r}\left|f_z(X_n)-f_z(X)\right|\toas 0.
$$
Recalling the uniform boundedness of $(f_z)$ and invoking the dominated convergence theorem, we arrive at~\eqref{weyl_uniform_convergence2}. Relation~\eqref{weyl_uniform_convergence_rational} follows analogously from the observation that
$$
\mu_n^{\prime\prime}:=\frac{1}{na}\sum_{k\geq 1}\delta_{(k/n, (2\pi p k/q){\rm mod}(2\pi)\})}\to \mu^{\prime\prime}:=a^{-1}\mathbb{LEB}\times\mathbb{U}_q,
$$
weakly, where $\mathbb{U}_q$ is a uniform measure on the set $\{0,2\pi /q, 4\pi /q,\ldots,2\pi (q-1)/q\}$.
The proof of Lemma~\ref{weyl_uniform_lemma} is complete.
\end{proof}

The next lemma shows that the mapping $\mathcal{F}:D([0,\,1],\C)\to\mathcal{H}$ defined by~\eqref{mapping_definition} is continuous.

\begin{lemma}\label{continuous_mapping}
The mapping $\mathcal{F}$, defined by~\eqref{mapping_definition}, is everywhere continuous on $D([0,\,1],\C)$.
\end{lemma}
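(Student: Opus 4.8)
The plan is to show: if $f_n\to f$ in $(D([0,1],\C),J_1)$, then $\mathcal{F}(f_n)\to\mathcal{F}(f)$ uniformly on every closed disk $\bar{\mathbb D}_R$, which is precisely convergence in $\mathcal H$. I would work directly from the integrated‑by‑parts formula in~\eqref{mapping_definition},
\[
(\mathcal{F}(f))(z)=f(1)\eee^{\iii z}-f(0)-\iii z\int_0^1 f(x)\eee^{\iii z x}\,\dd x,
\]
so that, using $|\eee^{\iii z x}|=\eee^{-x\Im z}\le \eee^{R}$ for $x\in[0,1]$ and $|z|\le R$,
\[
\sup_{|z|\le R}\bigl|(\mathcal{F}(f_n))(z)-(\mathcal{F}(f))(z)\bigr|
\le |f_n(1)-f(1)|\,\eee^{R}+|f_n(0)-f(0)|+R\,\eee^{R}\int_0^1|f_n(x)-f(x)|\,\dd x.
\]
The right‑hand side no longer involves $z$, so it suffices to prove that each of the three terms tends to $0$ as $n\to\infty$.

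For this I would invoke the time‑change description of $J_1$ convergence: there exist increasing homeomorphisms $\lambda_n$ of $[0,1]$ with $\lambda_n(0)=0$, $\lambda_n(1)=1$, $\|\lambda_n-\mathrm{id}\|_\infty\to 0$ and $\|f\circ\lambda_n-f_n\|_\infty\to 0$. Three consequences are then extracted. First, $\|f_n\|_\infty\le \|f\circ\lambda_n-f_n\|_\infty+\|f\|_\infty$, so $M:=\sup_n\|f_n\|_\infty\vee\|f\|_\infty<\infty$. Second, evaluating at the fixed endpoints $x\in\{0,1\}$ gives $f(\lambda_n(x))=f(x)$, hence $f_n(0)\to f(0)$ and $f_n(1)\to f(1)$, which kills the first two terms. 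Third, at every continuity point $x$ of $f$ we have $\lambda_n(x)\to x$, hence $f(\lambda_n(x))\to f(x)$, hence $f_n(x)\to f(x)$; since a c\`adl\`ag function on $[0,1]$ has at most countably many discontinuities, the set where this can fail is Lebesgue‑null, so $f_n\to f$ Lebesgue‑a.e.\ on $[0,1]$. As $|f_n-f|\le 2M$, the dominated convergence theorem yields $\int_0^1|f_n(x)-f(x)|\,\dd x\to 0$, which kills the third term. Combining, $\sup_{|z|\le R}|(\mathcal{F}(f_n))(z)-(\mathcal{F}(f))(z)|\to 0$ for every $R>0$, i.e.\ $\mathcal{F}(f_n)\to\mathcal{F}(f)$ in $\mathcal H$.

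The only genuine subtlety — and the step I would treat most carefully — is that $J_1$ convergence does \emph{not} give pointwise (still less uniform) convergence $f_n(x)\to f(x)$ at the jump points of $f$, so one cannot estimate $\int_0^1|f_n-f|$ before first observing that those jump points form a Lebesgue‑null set; everything else is a routine application of standard facts about the Skorokhod space. (As an alternative, one could integrate by parts once more to express $(\mathcal{F}(f))(z)$ through the primitive $t\mapsto\int_0^t f$ and use that $f\mapsto\int_0^{\cdot}f$ is continuous from $D([0,1],\C)$ with $J_1$ into $C([0,1],\C)$ with the uniform topology; I find the a.e.\ route above more self‑contained, but either works.)
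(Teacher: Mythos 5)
Your argument is correct and follows essentially the same route as the paper: reduce to convergence of the endpoint values $f_n(0),f_n(1)$ and of $\int_0^1|f_n-f|\,\dd x$, then bound $\eee^{\iii zx}$ uniformly on compact sets. The only difference is that the paper simply cites the fact that $J_1$-convergence implies $L^1$-convergence (Lemma~2.2 of the Johansson--Sola reference), whereas you prove it from scratch via the time-change characterization, a.e.\ pointwise convergence off the countable jump set, and dominated convergence --- a valid, self-contained substitute for that citation.
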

\begin{proof}
Let $f_n\to f$, as $n\to\infty$, on $D([0,\,1],\C)$. Fix a compact set $K\subset \C$ and let us show that
\begin{equation}\label{continuous_proof1}
\lim_{n\to\infty}\sup_{z\in K}|(\mathcal{F}(f_n))(z)-(\mathcal{F}(f))(z)|=0.
\end{equation}
By the definition of $J_1$-topology,  $f_n(1)\to f(1)$ and $f_n(0)\to f(0)$ as $n\to\infty$. Hence equation~\eqref{continuous_proof1} is equivalent to
\begin{equation}\label{continuous_proof2}
\lim_{n\to\infty}\sup_{z\in K}\left|\int_0^1 f_n(x)\eee^{\iii z x}{\rm d}x-\int_0^1 f(x)\eee^{\iii z x}{\rm d}x\right|=0.
\end{equation}
It is known that convergence in $D([0,\,1],\C)$ implies convergence in $L_1([0,\,1])$, see e.g.\ Lemma 2.2 in \cite{Johansson+Sola:2009}. Hence,
$$
\lim_{n\to\infty}\int_0^{1} |f_n(x)-f(x)|{\rm d}x=0
$$
and~\eqref{continuous_proof2} follows from the the inequalities
\begin{align*}
\sup_{z\in K}\left|\int_0^1 f_n(x)\eee^{\iii z x}{\rm d}x-\int_0^1 f(x)\eee^{\iii z x}{\rm d}x\right|
&\leq\int_0^1 \left(\sup_{z\in K}|\eee^{\iii z x}|\right)|f_n(x)-f(x)|{\rm d}x\\
&\leq\left(1+\sup_{z\in K}\eee^{-\Im z}\right)\int_0^1|f_n(x)-f(x)|{\rm d}x.
\end{align*}
The proof of Lemma~\ref{continuous_mapping} is complete.
\end{proof}

\subsection*{Acknowledgements}
This work was done while A.~Iksanov was visiting M\"{u}nster in
January--February 2016. He gratefully acknowledges hospitality and the financial support by DFG SFB 878 ``Geometry, Groups and Actions''. The work of
A.~Marynych was supported by the Alexander von Humboldt
Foundation.

\bibliographystyle{plain}
\bibliography{loc_universality_bib}

\begin{thebibliography}{10}

\bibitem{angst_poly}
J.~Angst and G.~Poly.
\newblock Universality of the mean number of real zeros of random trigonometric
  polynomials under a weak {C}ramer condition.
\newblock Preprint at http://arxiv.org/abs/1511.08750, 2015.

\bibitem{antezana_etal}
J.~{Antezana}, J.~{Buckley}, J.~{Marzo}, and J.-F. {Olsen}.
\newblock {Gap probabilities for the cardinal sine.}
\newblock {\em {J. Math. Anal. Appl.}}, 396(2):466--472, 2012.

\bibitem{azais_etal}
J.-M. Aza{\"{i}}s, F.~Dalmao, J.~Le{\'{o}}n, I.~Nourdin, and G.~Poly.
\newblock Local universality of the number of zeros of random trigonometric
  polynomials with continuous coefficients.
\newblock Preprint at http://arxiv.org/abs/1512.05583.

\bibitem{billingsley_book}
P.~{Billingsley}.
\newblock {\em {Convergence of probability measures.}}
\newblock Chichester: Wiley, 2nd edition, 1999.

\bibitem{bulinskaya}
E.V. {Bulinskaya}.
\newblock {On the mean number of crossing of a level by a stationary Gaussian
  process.}
\newblock {\em {Theory Probab. Appl.}}, 6:435--438, 1962.

\bibitem{conway_book}
J.~B. Conway.
\newblock {\em Functions of one complex variable}, volume~11 of {\em Graduate
  Texts in Mathematics}.
\newblock Springer-Verlag, New York-Berlin, second edition, 1978.

\bibitem{Feller:1971}
W.~Feller.
\newblock {\em An introduction to probability theory and its applications.
  {V}ol. {II}.}
\newblock Second edition. John Wiley \& Sons, Inc., New York-London-Sydney,
  1971.

\bibitem{flasche}
H.~Flasche.
\newblock Expected number of real roots of random trigonometric polynomials.
\newblock Preprint at http://arxiv.org/abs/1601.01841, 2016.

\bibitem{ben_hough_book}
J.~Ben {Hough}, M.~{Krishnapur}, Y.~{Peres}, and B.~{Vir\'ag}.
\newblock {\em {Zeros of Gaussian analytic functions and determinantal point
  processes.}}
\newblock Providence, RI: American Mathematical Society, 2009.

\bibitem{iksanov_incr_response}
A.~Iksanov.
\newblock Functional limit theorems for renewal shot noise processes with
  increasing response functions.
\newblock {\em Stochastic Process. Appl.}, 123(6):1987--2010, 2013.

\bibitem{Johansson+Sola:2009}
F.~Johansson and A.~Sola.
\newblock Rescaled {L}\'evy--{L}oewner hulls and random growth.
\newblock {\em Bull. Sci. Math.}, 133(3):238--256, 2009.

\bibitem{kabluchko_klimovsky1}
Z.~Kabluchko and A.~Klimovsky.
\newblock Complex random energy model: zeros and fluctuations.
\newblock {\em Probab. Th. Related Fields}, 158(1--2):159--196, 2014.

\bibitem{kabluchko_klimovsky2}
Z.~Kabluchko and A.~Klimovsky.
\newblock Generalized {R}andom {E}nergy {M}odel at complex temperatures.
\newblock Preprint at http://arxiv.org/abs/1402.2142, 2014.

\bibitem{Kuipers+Niederreiter:1974}
L.~Kuipers and H.~Niederreiter.
\newblock {\em Uniform distribution of sequences}.
\newblock Wiley-Interscience, New York-London-Sydney, 1974.
\newblock Pure and Applied Mathematics.

\bibitem{ledoan_etal}
A.~Ledoan, M.~Merkli, and S.~Starr.
\newblock A universality property of {G}aussian analytic functions.
\newblock {\em J. Theoret. Probab.}, 25(2):496--504, 2012.

\bibitem{Resnick:1986}
S.~I. Resnick.
\newblock Point processes, regular variation and weak convergence.
\newblock {\em Adv. in Appl. Probab.}, 18(1):66--138, 1986.

\bibitem{resnick_book}
S.~I. {Resnick}.
\newblock {\em {Extreme values, regular variation and point processes.}}
\newblock New York: Springer, reprint of the 1987 original edition, 2008.

\bibitem{samorodnitsky_taqqu_book}
G.~Samorodnitsky and M.~S. Taqqu.
\newblock {\em Stable non-{G}aussian random processes. Stochastic models with
  infinite variance}.
\newblock Stochastic Modeling. Chapman \& Hall, New York, 1994.

\bibitem{shirai}
T.~Shirai.
\newblock Limit theorems for random analytic functions and their zeros.
\newblock {\em RIMS K{\^{o}}ky{\^{u}}roku Bessatsu}, B34:335--359, 2012.

\bibitem{starr}
S.~Starr.
\newblock Universality of correlations for random analytic functions.
\newblock In {\em Entropy and the quantum {II}}, volume 552 of {\em Contemp.
  Math.}, pages 135--144. Amer. Math. Soc., Providence, RI, 2011.

\bibitem{Turan-Kaminska:2010}
M.~Tyran-Kami{\'n}ska.
\newblock Convergence to {L}\'evy stable processes under some weak dependence
  conditions.
\newblock {\em Stochastic Process. Appl.}, 120(9):1629--1650, 2010.

\bibitem{ylvisaker}
N.~D. {Ylvisaker}.
\newblock {A note on the absence of tangencies in {G}aussian sample paths.}
\newblock {\em {Ann. Math. Stat.}}, 39:261--262, 1968.

\bibitem{ylvisaker_stat}
N.~D. Ylvisarer.
\newblock The expected number of zeros of a stationary {G}aussian process.
\newblock {\em Ann. Math. Statist}, 36, 1965.

\end{thebibliography}

\end{document}